\newtheorem{thm}{Theorem}[section]
\newtheorem{lem}[thm]{Lemma}
\newtheorem{prop}[thm]{Proposition}
\newtheorem{cor}[thm]{Corollary}
\newtheorem{defn}[thm]{Definition}
\newtheorem*{question*}{Question}
\newtheorem*{prob*}{Problem}
\def\R{\mathbb{R}}
\def\N{\mathbb{N}}
\def\Z{\mathbb{Z}}
\def\Q{\mathbb{Q}}
\def\E{\mathbb{E}}
\def\FF{F}
\def\O{\mathcal{O}}
\def\X{\mathcal{X}}
\def\C{\mathbb{C}}
\def\PP{\mathbb{P}}
\def\1{\mathbbm{1}}
\def\Var {\mathrm{Var}}
\def\Cov {\mathrm{Cov}}
\def \spann {\mathrm{span}}
\def\Xint#1{\mathchoice
{\XXint\displaystyle\textstyle{#1}}%
{\XXint\textstyle\scriptstyle{#1}}%
{\XXint\scriptstyle\scriptscriptstyle{#1}}%
{\XXint\scriptscriptstyle\scriptscriptstyle{#1}}%
\!\int}
\def\XXint#1#2#3{{\setbox0=\hbox{$#1{#2#3}{\int}$ }
\vcenter{\hbox{$#2#3$ }}\kern-.6\wd0}}
\def\dashint{\Xint-}
\begin{document}

\title[Stationary  DPP  in non-Archimedean fields]{Rigid stationary determinantal processes in non-Archimedean fields}


\author
{Yanqi Qiu}

\address
{Yanqi Qiu: CNRS, Institut de Math{\'e}matiques de Toulouse, Universit{\'e} Paul Sabatier, 118, Route de Narbonne, F-31062 Toulouse Cedex 9}

\email{yqi.qiu@gmail.com}
%
%

\begin{abstract}
 Let $F$ be a non-discrete non-Archimedean local field. For any subset $S\subset F$ with finite Haar measure, there is a stationary determinantal point process on $F$ with correlation kernel $\widehat{\mathbbm{1}}_S(x-y)$, where $\widehat{\mathbbm{1}}_S$ is the Fourier transform of the indicator function $\mathbbm{1}_S$. In this note, we give a geometrical condition on the subset $S$, such that the associated determinantal point process is rigid in the sense of Ghosh and Peres. Our geometrical condition is very different from the Euclidean case.
\end{abstract}

\subjclass[2010]{Primary 60G10; Secondary 60G55}
\keywords{non-Archimedean local field, stationary determinantal point processes, rigidity}

\maketitle

\section{Main result}
Let $F$ be a  non-discrete non-Archimedean local field. Write $\O_F$ for the  its valuation ring with maximal ideal $\mathcal{M}_F$. Let $q: = p^e$ be the number of elements of the finite residue field $\O_F/\mathcal{M}_F$, where $p$ is a prime number and $e \in \N$.  Fix the standard norm $| \cdot |$ on $F$.  Let $\mathfrak{m}$ be  the  Haar measure on $F$ normalized such that $\mathfrak{m}(\O_F) = 1$. 

 By a random point process  $\mathcal{X}$ on $F$, we mean a random {\it locally finite} subset of $F$. The main objects under consideration in this note are stationary determinantal point processes on $F$.  For the background on determinantal point processes, the reader is referred to \cite{DPP-HKPV, DPP-L, DPP-S, DPP-M} and references therein.

\begin{defn}[Ghosh and Peres \cite{Ghosh-sine, Ghosh-rigid}]\label{def-rigid}
A random point process $\mathcal{X}$ on $F$ is number rigid, if for any bounded open subset $B\subset F$, the number $\#(\mathcal{X} \cap B)$ of particles of the random point process $\mathcal{X}$ inside $B$, is almost surely determined by $\mathcal{X}\setminus B$. 
\end{defn}
We refer to \cite{Ghosh-rigid3, QB3, Buf-rigid, QB4, Osada-Shirai} for further references on the number-rigidity property for determinantal point processes. 

 Let $S \subset F$ be a measurable subset such that $0< \mathfrak{m}(S) < \infty$. By Macch\`i-Soshnikov theorem, we may introduce a determinantal point process on $F$, denoted by $\mathcal{X}_S$, whose correlation kernel is given by
\[
K_S(x,y)  = \widehat{\1_S}(x-y),  \quad ( x, y \in F),
\]
where $\widehat{\1_S}$ is the Fourier transform of the indicator function $\1_S$ of the set $S$, see \S \ref{sec-F} for the precise definition of the Fourier transform in the non-Archimedean setting. The translation-invariance of the kernel $K_S(x, y)$ implies that the random point process $\mathcal{X}_S$ is stationary, that is,  the probability distribution of $\X_S$ is invariant under translations.

Our main result is 
\begin{thm}\label{main-thm}
Assume that $S \subset F$ is a measurable subset  such that $0< \mathfrak{m}(S) < \infty$ and
\begin{align}\label{con-geo}
 \dashint_{B(0, q^{-n})}\mathfrak{m}(S\setminus (S+y) ) \mathrm{d}\mathfrak{m}(y) = O (q^{-n}) \text{\, as $n \to \infty$,}
\end{align}
where $ \dashint_{B(0, q^{-n})} \mathrm{d}\mathfrak{m} $ is the normalized integration. Then the determinantal point process induced by the kernel $\widehat{\1_S}(x-y)$ is number rigid. 
\end{thm}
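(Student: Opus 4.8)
The plan is to verify the Ghosh--Peres sufficient condition for number rigidity. Recall from \cite{Ghosh-rigid} that $\X_S$ is number rigid provided that for every bounded open set $B\subset F$ one can produce bounded, compactly supported functions $\phi_n$ with $\phi_n\equiv 1$ on $B$ and $\Var\big(\sum_{x\in\X_S}\phi_n(x)\big)\to 0$ as $n\to\infty$. Indeed, writing $S_{\phi_n}=\#(\X_S\cap B)+\sum_{x\in\X_S\setminus B}\phi_n(x)$ and letting $\Var(S_{\phi_n})\to 0$, the random variable $\#(\X_S\cap B)-\E[\#(\X_S\cap B)]$ becomes an $L^2$-limit of functions of $\X_S\setminus B$, hence $\sigma(\X_S\setminus B)$-measurable. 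Since $K_S(x,y)=\widehat{\1_S}(x-y)$ is the kernel of the orthogonal projection $P_S=\mathcal{F}^{-1}M_{\1_S}\mathcal{F}$ onto the functions with Fourier support in $S$, I will use the projection variance formula: for bounded compactly supported $f,g$,
\[
\Cov\Big(\sum_{x}f(x),\sum_{x}g(x)\Big)=\mathfrak{m}(S)\int_F fg\,\mathrm d\mathfrak{m}-\iint_{F\times F}f(x)g(y)\,|\widehat{\1_S}(x-y)|^2\,\mathrm d\mathfrak{m}(x)\,\mathrm d\mathfrak{m}(y),
\]
where I have used $K_S(x,x)=\widehat{\1_S}(0)=\mathfrak{m}(S)$.

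The heart of the argument is to recognize that the geometric quantity in \eqref{con-geo} is exactly a spectral tail. Set $L_b:=B(0,q^{b})$, a compact open subgroup with $\mathfrak{m}(L_b)=q^{b}$ whose annihilator is $B(0,q^{-b})$, so that $\widehat{\1_{L_b}}=q^{b}\1_{B(0,q^{-b})}$ by self-duality. Define the tail
\[
T(b):=\int_{|u|>q^{b}}|\widehat{\1_S}(u)|^2\,\mathrm d\mathfrak{m}(u).
\]
Plancherel gives $\int_F|\widehat{\1_S}|^2\,\mathrm d\mathfrak{m}=\mathfrak{m}(S)$, and applying Parseval to $\int_{L_b}|\widehat{\1_S}|^2=\langle M_{\1_{L_b}}\mathcal{F}\1_S,\mathcal{F}\1_S\rangle=\langle P_b\1_S,\1_S\rangle$, together with the fact that $P_b$ has kernel $q^{b}\1_{B(0,q^{-b})}(x-y)$, yields
\[
\int_{L_b}|\widehat{\1_S}(u)|^2\,\mathrm d\mathfrak{m}(u)=\dashint_{B(0,q^{-b})}\mathfrak{m}\big(S\cap(S+v)\big)\,\mathrm d\mathfrak{m}(v).
\]
Subtracting from $\mathfrak{m}(S)$ and using $\mathfrak{m}(S)-\mathfrak{m}(S\cap(S+v))=\mathfrak{m}(S\setminus(S+v))$ gives the crucial identity
\[
T(b)=\dashint_{B(0,q^{-b})}\mathfrak{m}\big(S\setminus(S+v)\big)\,\mathrm d\mathfrak{m}(v),
\]
so that hypothesis \eqref{con-geo} is precisely the statement $T(b)=O(q^{-b})$.

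With this in hand I would choose $N_0$ with $B\subseteq L_{N_0}$ and take the multiscale average
\[
\phi_n:=\frac1n\sum_{k=1}^{n}\1_{L_{N_0+k}},
\]
which is $\equiv 1$ on $L_{N_0}\supseteq B$, satisfies $0\le\phi_n\le 1$, and has compact support. A direct computation with the covariance formula, substituting $u=x-y$ and using that $x-L_b=L_b$ for $x\in L_a\subseteq L_b$, gives for $a\le b$
\[
\Cov\Big(\sum_x\1_{L_a}(x),\sum_x\1_{L_b}(x)\Big)=q^{a}\mathfrak{m}(S)-q^{a}\big(\mathfrak{m}(S)-T(b)\big)=q^{a}\,T(b),
\]
that is, $q^{\min(a,b)}\,T(\max(a,b))$. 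Feeding in $T(N_0+j)=O(q^{-(N_0+j)})$ bounds the $(k,l)$ covariance by $C\,q^{-|k-l|}$, whence
\[
\Var\Big(\sum_{x}\phi_n(x)\Big)=\frac1{n^2}\sum_{k,l=1}^{n}\Cov\big(\textstyle\sum_x\1_{L_{N_0+k}},\sum_x\1_{L_{N_0+l}}\big)\le\frac{C}{n^2}\sum_{k,l=1}^{n}q^{-|k-l|}\le\frac{C}{n}\cdot\frac{q+1}{q-1}\longrightarrow 0.
\]
Invoking the Ghosh--Peres reduction of the first paragraph then finishes the proof.

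I expect the main obstacle to be the spectral-geometric identity of the second paragraph: identifying \eqref{con-geo} with the Fourier tail $T(b)$ requires the self-duality $\widehat{\1_{L_b}}=q^{b}\1_{B(0,q^{-b})}$ and a careful Parseval manipulation, and it is here that the ultrametric structure (nested subgroups $L_b$ with dual balls $B(0,q^{-b})$) does the essential work. Once this identity is in place, the choice of the slowly decaying multiscale weight $\phi_n$ and the geometric summation are routine; note in particular that the naive single-ball choice $\phi_n=\1_{L_{N_0+n}}$ only yields $\Var=q^{N_0+n}T(N_0+n)=O(1)$, so the averaging over $n$ distinct scales is precisely what pushes the variance to zero.
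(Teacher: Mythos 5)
Your proof is correct, and it reaches rigidity by a genuinely different mechanism than the paper, although the two arguments share the same first half. Your spectral--geometric identity $T(b)=\dashint_{B(0,q^{-b})}\mathfrak{m}(S\setminus(S+v))\,\mathrm{d}\mathfrak{m}(v)$ is exactly the content of the paper's Lemma \ref{lem-av-decay} combined with Lemma \ref{lem-ref}, proved by the same Parseval/self-duality computation, and your covariance identity $\Cov\big(\sum_x\1_{L_a}(x),\sum_x\1_{L_b}(x)\big)=q^{\min(a,b)}T(\max(a,b))$ is the paper's formula \eqref{f-red} summed over shells. The divergence comes afterwards. The paper treats the ball counts $N_x^{(m)}=\#(\X_S\cap B(x,q^m))$, $x\in\mathbb{L}_m$, as a stationary process over the discrete group $\mathbb{L}_m\cong\Gamma$ and invokes prediction theory: the covariance bounds show that the spectral density $f$ of this process vanishes at $0$ with a Lipschitz estimate $f(h)\le C|h|$ (Lemma \ref{lem-zyg}), and since $\int_{\O_F}|h|^{-1}\,\mathrm{d}\mathfrak{m}=\infty$ (Lemma \ref{lem-div}), Kolmogorov's criterion (Lemma \ref{lem-Kol}) forces the prediction error to vanish, giving Kolmogorov minimality (Proposition \ref{prop-min}) and hence rigidity. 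You instead verify the Ghosh--Peres variance criterion directly, with the explicit multiscale test functions $\phi_n=\frac1n\sum_{k=1}^n\1_{B(0,q^{N_0+k})}$ and a geometric-series bound yielding $\Var\big(\sum_x\phi_n(x)\big)=O(1/n)$; your closing observation that a single ball gives only $O(1)$ variance, so that averaging over $n$ scales is what pushes the variance to zero, is precisely the non-Archimedean analogue of Ghosh's argument for the sine process. Your route is more elementary and self-contained (no Bochner theorem, spectral measures, or Kolmogorov prediction formula) and is quantitative; the paper's route is more structural, since Kolmogorov's criterion gives an exact formula for the $L^2$ prediction error and thereby isolates the precise mechanism (vanishing of the spectral density at $0$ fast enough that $1/f$ is non-integrable), which could in principle accommodate hypotheses weaker than $O(q^{-n})$ decay and connects to the general framework of \cite{QB4}. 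Two small points you should still make explicit: the covariance formula for linear statistics needs to be derived from the definition \eqref{def-DPP} with $n=1,2$ (as the paper does in Lemma \ref{lem-vanish}), and since \eqref{con-geo} is only an asymptotic bound you should either take $N_0$ large enough or enlarge the constant so that $T(N_0+j)\le Cq^{-(N_0+j)}$ holds for all $j\ge 1$.
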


The geometrical condition \eqref{con-geo} can be replaced by an analytic condition on the $L^2$-decay of the Fourier transform $\widehat{\1_S}$. 

\begin{thm}\label{main-thm-bis}
Assume that $S \subset F$ is a measurable subset  such that $0< \mathfrak{m}(S) < \infty$ and
\begin{align}\label{con-anl}
 \int_{F\setminus B(0, q^{n})}    |\widehat{\1_S}|^2 \mathrm{d}\mathfrak{m} = O (q^{-n}) \text{\, as $n \to \infty$.}
\end{align}
Then the determinantal point process induced by the kernel $\widehat{\1_S}(x-y)$ is number rigid. 
\end{thm}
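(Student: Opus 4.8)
The plan is to deduce Theorem \ref{main-thm-bis} from the already-established Theorem \ref{main-thm} by showing that the analytic hypothesis \eqref{con-anl} and the geometric hypothesis \eqref{con-geo} describe one and the same quantity. Set $g := |\widehat{\1_S}|^2$. By Plancherel one has $g \in L^1(F)$ with $\int_F g \, \mathrm{d}\mathfrak{m} = \mathfrak{m}(S)$, and the convolution theorem identifies the inverse Fourier transform of $g = \widehat{\1_S}\,\overline{\widehat{\1_S}}$ with the autocorrelation of $\1_S$, that is, with the continuous positive-definite function $\xi \mapsto \mathfrak{m}(S \cap (S+\xi))$, which equals $\mathfrak{m}(S)$ at $\xi = 0$. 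Accordingly, writing $\chi$ for the standard additive character of $F$ fixed in \S\ref{sec-F}, the integrand in \eqref{con-geo} becomes
\[
\mathfrak{m}(S \setminus (S+\xi)) = \mathfrak{m}(S) - \mathfrak{m}(S \cap (S+\xi)) = \int_F g \, \mathrm{d}\mathfrak{m} - \int_F g(u)\,\chi(\xi u)\,\mathrm{d}\mathfrak{m}(u).
\]

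The key step is then to average this identity over the ball $B(0,q^{-n})$ and to invoke the non-Archimedean orthogonality relation
\[
\dashint_{B(0,q^{-n})} \chi(\xi u)\,\mathrm{d}\mathfrak{m}(\xi) = \1_{B(0,q^n)}(u), \qquad u \in F,
\]
which holds because $\chi$ is trivial on $\O_F$ and the self-dual normalization $\mathfrak{m}(\O_F) = 1$ forces $\widehat{\1_{B(0,q^{-n})}} = q^{-n}\1_{B(0,q^n)}$. Since $g \in L^1(F)$ and $|\chi| \equiv 1$, the double integral is absolutely convergent and Fubini's theorem permits interchanging the two integrations; this produces the exact equality
\[
\dashint_{B(0,q^{-n})}\mathfrak{m}(S\setminus(S+\xi))\,\mathrm{d}\mathfrak{m}(\xi) = \int_F g \,\mathrm{d}\mathfrak{m} - \int_{B(0,q^n)} g \,\mathrm{d}\mathfrak{m} = \int_{F\setminus B(0,q^n)} |\widehat{\1_S}|^2\,\mathrm{d}\mathfrak{m}.
\]
In particular the left-hand sides of \eqref{con-geo} and \eqref{con-anl} coincide for every $n$, so \eqref{con-anl} is exactly equivalent to \eqref{con-geo}, and Theorem \ref{main-thm} applies verbatim to yield the number rigidity of $\mathcal{X}_S$. (Alternatively, one could bypass Theorem \ref{main-thm} entirely and rerun the variance estimate directly, using the Ghosh--Peres criterion with the multiscale test functions $\varphi_n = \frac{1}{n}\sum_{k=0}^{n-1}\1_{B(0,q^{k})}$ and the projection-kernel identity $\Var\big(\sum_{x\in\X_S}\varphi(x)\big) = \tfrac12 \iint |\varphi(x)-\varphi(y)|^2\, g(x-y)\,\mathrm{d}\mathfrak{m}(x)\mathrm{d}\mathfrak{m}(y)$, controlling the $q$-adic shells by \eqref{con-anl}; but the reduction above is the shorter route.)

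The only point demanding care is the Fourier-analytic identity in the second paragraph. One must fix the character $\chi$ and the Haar measure $\mathfrak{m}$ compatibly so that the self-dual relation $\widehat{\1_{B(0,q^{-n})}} = q^{-n}\1_{B(0,q^n)}$ and the orthogonality relation hold in the precise stated form, and one must correctly identify the inverse transform of $|\widehat{\1_S}|^2$ with the autocorrelation $\mathfrak{m}(S\cap(S+\cdot))$. The sign ambiguity in the character is harmless here, since $\1_S$ is real-valued, so $g$ and $\xi\mapsto\mathfrak{m}(S\cap(S+\xi))$ are even and the averaging over the symmetric ball $B(0,q^{-n})$ is insensitive to it. Once these normalizations are pinned down the interchange of integrals is immediate, and no estimate beyond what Theorem \ref{main-thm} already provides is required.
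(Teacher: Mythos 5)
Your proposal is correct, but it reverses the paper's logical order. The paper proves Theorem \ref{main-thm-bis} directly: in the proof of Theorem \ref{main-thm}, one replaces Lemma \ref{lem-ref} by Lemma \ref{lem-av-decay}, which says the covariance tail $\sum_{x \in \mathbb{L}_m,\, |x|>q^n} |\Cov(N_x,N_0)|$ equals $q^m \int_{F\setminus B(0,q^n)} |\widehat{\1_S}|^2 \, \mathrm{d}\mathfrak{m}$, so that \eqref{con-anl} feeds straight into Corollary \ref{cor-min} together with Lemma \ref{lem-vanish}; in this architecture the analytic hypothesis is the primitive one, and it is the geometric Theorem \ref{main-thm} that needs an extra conversion step. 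You instead take Theorem \ref{main-thm} as a black box and show that \eqref{con-anl} and \eqref{con-geo} are literally the same quantity, via the identity $\dashint_{B(0,q^{-n})} \mathfrak{m}(S\setminus(S+\xi))\,\mathrm{d}\mathfrak{m}(\xi) = \int_{F\setminus B(0,q^n)} |\widehat{\1_S}|^2\,\mathrm{d}\mathfrak{m}$. This identity is true, and is in fact implicit in the paper (combine \eqref{av-decay} with \eqref{cont-analogue}); your derivation of it --- the inverse Fourier transform of $|\widehat{\1_S}|^2$ is the autocorrelation, average the character over the ball using $\1_{B(0,q^n)} = q^n \widehat{\1_{B(0,q^{-n})}}$, then Fubini --- is a compact repackaging of the paper's proof of Lemma \ref{lem-ref}, which reaches the same equality by Parseval applied to $\1_S * \1_{B(0,q^{-n})}$. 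What each approach buys: the paper's route is shorter given its lemmas and keeps Theorem \ref{main-thm-bis} logically independent of Theorem \ref{main-thm}, while yours makes explicit the pleasant fact that the two sufficient conditions coincide exactly rather than merely implying one another, at the price of justifying pointwise Fourier inversion for $|\widehat{\1_S}|^2$ (continuity of the autocorrelation plus $|\widehat{\1_S}|^2 \in L^1$ by Plancherel --- you flag this, and it is standard). One small point: your orthogonality relation requires not only $\chi|_{\O_F}\equiv 1$ and $\mathfrak{m}(\O_F)=1$ but also that $\chi$ be non-trivial on $\varpi^{-1}\O_F$, which is exactly the paper's standing assumption \eqref{ass-chi}; without it, $\widehat{\1_{B(0,q^{-n})}}$ would be the normalized indicator of a strictly larger ball and the identity would fail.
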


We note that in Euclidean case, the {\bf stationary} determinantal point processes  that are known to be  number rigid are
\begin{itemize}
\item  \cite{Ghosh-sine}  the Dyson sine process or slightly more generally, the determinantal point processes on $\R$ with a correlation kernel 
\[
\widehat{\1_S} (x -y) , \quad (x, y \in \R), 
\]
where $S$ is a {\bf finite union of intervals}; 
\item  \cite{Ghosh-rigid} the Ginibre point process.  
\end{itemize}
But the Ginibre point process is induced by the kernel
\[
e^{-\frac{|z|^2 +|w|^2}{2} + z\bar{w}}, \quad (z, w\in \C), 
\]
which is not given by the Fourier transform of any function on $\C\simeq \R^2$. Therefore,  in Euclidean case, only in dimension one do we know examples of {\bf stationary} determinantal point processes whose correlation kernels are convolution kernels. It has also been mentioned in \cite{QB4} that the existing methods do not seem to produce number rigid stationary determinantal point processes  in $\R^d$ with $d \ge 2$.  

While in non-Archimedean setting,  there exist trivial examples of stationary determinantal point processes: for example, in the case of $p$-adic number field $\Q_p$, for any $n \in \Z$,  by the identity 
\[
p^{-n}\1_{p^{-n} \Z_p}   = \widehat{\1_{p^n \Z_p}}, 
\] 
we know that the kernel 
\begin{align}\label{trivial-kernel}
p^{-n}\1_{p^{-n} \Z_p} (x -y)  = \widehat{\1_{p^n \Z_p}}(x-y), \quad (x, y \in \Q_p)
\end{align}
induces a determinantal point process on $\Q_p$.  This determinantal point process can be trivially verified to be number rigid since  for any pair $x, y \in \Q_p$ such that $|x -y|_p > p^n$, we have
\[
\widehat{\1_{p^n \Z_p}}(x-y) = 0
\]
and this implies that the determinantal point process induced by the kernel \eqref{trivial-kernel} is the union of countably many independent copies of determinantal point processes on  the translates of $p^{-n}\Z_p$, each of them has exactly one particle. By the same reason, if $S\subset F$ is a {\bf union of finitely many balls}, then the determinantal point process induced by the kernel $\widehat{\1_S} (x -y)$ is trivially number rigid. Of course, in these trivial examples, the sets $S$ satisfy trivially the condition \eqref{con-geo}.  

Theorem \ref{main-thm} produces non-trivial examples of Borel subsets $S\subset F$ such that the associated stationary determinantal point processes are number rigid, see \S \ref{sec-ex} for such examples. Since  any finite dimensional vector space over $F$ can be seen as a finite extension of the field $F$, which is again  a non-Archimedean local field, Theorem \ref{main-thm} produces non-trivial examples of stationary determinantal point processes on any finite dimensional vector space over $F$. 

Recall that any open set of the real line $\R$ is a countable union of intervals,  inspired by our result in non-Archimedean situation,  it is natural to ask the following 
\begin{question*}
Let $S \subset \R$ be an {\bf open} subset with finite positive Lebesgue measure. Is the determinantal point process on $\R$ induced by the correlation kernel 
\[
\widehat{\1_S} (x- y), \quad( x, y \in \R)
\]
number rigid? 
\end{question*}

\section{Preliminaries}
\subsection{Notation}\label{sec-F}

We recall some basic notion in the theory of local fields.  Let $F$ be a non-discrete  non-Archimedean local field.  
The classification of local fields (see  Ramakrishnan and Valenza\cite[Theorem 4-12]{DR-fourier}) implies that $F$ is isomorphic to one of the following fields: 
\begin{itemize}
\item a finite extension of the field $\Q_p$ of $p$-adic numbers for some prime $p$.
\item the field of formal Laurent series  over a finite field.
\end{itemize}

Let $| \cdot |$ denote the  {\em absolute value} on $F$ and let $d(\cdot, \cdot)$ denote the metric on $F$ defined by $d(x, y) = | x-y|$.  The set $\O_F= \{x\in F: |x| \le 1\}$ forms a subring of $F$ and is called the ring of integers or the valuation ring of $F$.  The subset $\mathcal{M}_F =\{x\in F: | x| < 1\}$
 is the unique maximal  ideal of the integer-ring $\O_F$.  The quotient  $\O_F/\mathcal{M}_F$ is a  finite field with cardinality 
 \[
 \# (\O_F/ \mathcal{M}_F) = q = p^e, \quad \text{$p$ is prime and $e \in\N$}. 
 \] 
By fixing any  element $\varpi \in F$ with $|\varpi|= q^{-1}$, we have  
\[
\mathcal{M}_F= \varpi \O_F. 
\]

Denote by $\widehat{\FF}$ the Pontryagin dual of the additive group $\FF$. Elements in $\widehat{F}$ are  called characters of $F$.  Throughout the note,  we fix a non-trivial   character $\chi \in \widehat{\FF}$ such that 
\begin{align}\label{ass-chi}  
\text{  $\chi|_{\O_F} \equiv 1 $ and $\chi$ is  not constant on  $\varpi^{-1} \O_F$.}
\end{align}
For any $y \in \FF$, define a character   $\chi_y \in \widehat{\FF}$ by
$\chi_y(x) =\chi(yx)$, then  the map $y\mapsto \chi_{y}$ defines a group isomorphism from $\FF$ to $\widehat{\FF}$.

Let $\mathfrak{m}$ be  the  Haar measure on the additive group $F$ normalized such that $\mathfrak{m}(\O_F) = 1$. Given any function $f\in L^1(F, \mathfrak{m})$, its {\em Fourier transform}  is  defined by
\[
\widehat{f}(y) = \int_{F} f(x) \chi(xy) \mathrm{d}\mathfrak{m} (x),  \quad y \in F.
\]

\subsection{Determinantal point processes on $F$}
We say a random point process $\mathcal{X}$ is a determinantal point process induced by a correlation kernel $K: F \times F \rightarrow \C$, if for any positive integer $n$ and for any compactly supported bounded measurable function $\varphi: F^n \rightarrow \C$, we have 
\begin{align}\label{def-DPP}
\begin{split}
& \E \Big( \sum_{x_1, \dots, x_n \in \X}^*  \varphi(x_1, \dots, x_n) \Big) 
\\
=&   \int_{F^k} \varphi(y_1, \dots, y_n) \det( K(y_i, y_j))_{1 \le i, j \le n} \mathrm{d}\mathfrak{m} (y_1) \dots \mathrm{d}\mathfrak{m} (y_n),
\end{split}
\end{align}
where $\sum\limits^{*}$ denotes the sum over all ordered $n$-tuples of {\it distinct} points $(x_1, \dots, x_n) \in \mathcal{X}^n$.

\section{Kolmogorov minimality}

Let  $\Gamma = F/\O_F$  the quotient additive group. Then $\Gamma$ is a discrete countable group. 
Elements in $\Gamma$ will be denoted either by $\gamma \in \Gamma$ or $[x]\in \Gamma$ with $x \in F$.   We equip $\Gamma$ with an absolute value, denoted again by $| \cdot |$ and defined by 
\[
|[x]|: = \left\{\begin{array}{cc} |x|,  & x\notin \O_F \vspace{2mm} \\ 0, & x\in \O_F \end{array}\right.. 
\]
Note that the group $\Gamma$ is identified naturally with the Pontryagin dual of the additive group $\O_F$ by the following well-defined pairing 
\begin{align*}
\begin{array}{ccc}
\Gamma   \times \O_F&\rightarrow & \C
\\
([x], h) & \mapsto & \chi(xh)
\end{array},
\end{align*}
where $\chi$ is the fixed character of $F$ satisfying \eqref{ass-chi}. 

A $\Gamma$-indexed  family  $(X_\gamma)_{\gamma \in \Gamma}$ of $\C$-valued random variables defined on a common probability space $(\Omega, \PP)$ is called a $\Gamma$-indexed stochastic process. It is called (weakly)  $\Gamma$-stationary, if $X_\gamma \in L^2(\Omega, \PP)$ for all $\gamma\in \Gamma$ and for any $\gamma, \gamma',  \alpha \in \Gamma$, we have 
\begin{align*}
\E(X_\gamma) =  \E(X_{\gamma'}) \text{\, and \,} \Cov(X_\gamma, X_{\gamma'}) = \Cov(X_{\gamma + \alpha}, X_{\gamma' + \alpha}), 
\end{align*}
where $\E(X_\gamma)$ denotes the expectation of  $X_\gamma$ and $\Cov(X_\gamma, X_{\gamma'})$ denotes the covariance between $X_\gamma$ and $X_{\gamma'}$ defined by 
\[
\Cov(X_\gamma, X_{\gamma'}) := \E\Big((X_{\gamma} - \E (X_{\gamma} )) (\overline{X}_{\gamma'} - \E (\overline{X}_{\gamma'} ))\Big).
\]

The Bochner Theorem for positive definite functions on locally compact groups implies that given any weakly $\Gamma$-stationary stochastic process $X = (X_\gamma)_{\gamma\in \Gamma}$,  there exists a unique measure $\mu_X$ on $\O_F$, called the spectral measure of $X$,  such that  
\begin{align}\label{def-sp}
\int_{\O_F}   \gamma(x) \overline{\gamma'(x)} \mu_X (dx)  = \Cov(X_\gamma, X_{\gamma'}). 
\end{align}
\begin{defn}
A  weakly $\Gamma$-stationary stochastic process  $X = (X_\gamma)_{\gamma\in\Gamma}$ defined on a probability space $(\Omega, \PP)$ is called  Kolmogorov minimal, if 
 \begin{align}\label{minimal-rel}
 X_0 - \E(X_0)  \in \overline{\spann}^{L^2(\Omega, \PP)} \Big\{X_\gamma - \E(X_\gamma): \gamma \in \Gamma \setminus \{0\}\Big\}. 
 \end{align}
\end{defn}

\begin{prop}\label{prop-min}
Let $ X =(X_\gamma)_{\gamma\in  \Gamma }$ be a $\Gamma$-stationary stochastic process. Assume that 
\begin{itemize}
\item[(1)] $ \sum_{\gamma\in \Gamma}\Cov(X_\gamma, X_0) =0$;
\item[(2)] $ \sum_{|\gamma|> q^{n}}  | \Cov(X_\gamma, X_0)| = O (q^{-n})$, as $n\to \infty$.
\end{itemize}
Then  $X$ is Kolmogorov minimal. 
\end{prop}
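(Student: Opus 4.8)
The plan is to translate Kolmogorov minimality into an approximation problem in $L^2(\O_F,\mu_X)$ and then solve it using the two hypotheses. Write $Y_\gamma = X_\gamma - \E(X_\gamma)$, and for $\gamma\in\Gamma$ let $e_\gamma\in L^2(\O_F,\mu_X)$ denote the character $x\mapsto\gamma(x)$ coming from the pairing $\Gamma\times\O_F\to\C$. By the defining property \eqref{def-sp} of the spectral measure, $\langle Y_\gamma,Y_{\gamma'}\rangle_{L^2(\Omega)} = \Cov(X_\gamma,X_{\gamma'}) = \langle e_\gamma,e_{\gamma'}\rangle_{L^2(\mu_X)}$, so $Y_\gamma\mapsto e_\gamma$ extends to an isometry of $\overline{\spann}\{Y_\gamma\}$ onto $\overline{\spann}\{e_\gamma\}$. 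Since $e_0\equiv 1$, the relation \eqref{minimal-rel} is equivalent to $\mathbf 1\in\overline{\spann}^{L^2(\mu_X)}\{e_\gamma:\gamma\in\Gamma\setminus\{0\}\}$. Set $c_\gamma := \Cov(X_\gamma,X_0) = \int_{\O_F}\gamma\,\mathrm{d}\mu_X$, the $\gamma$-th Fourier coefficient of $\mu_X$; stationarity gives $\langle e_\gamma,e_{\gamma'}\rangle_{L^2(\mu_X)} = c_{\gamma-\gamma'}$.

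Next I would extract the analytic content of the hypotheses. Hypothesis (2) forces $\sum_\gamma|c_\gamma|<\infty$, hence $\mu_X$ has a continuous density: $\mu_X = w\,\mathrm{d}\mathfrak{m}$ with $w = \sum_\gamma c_\gamma\overline{\gamma}\ge 0$. Writing $\Gamma_n = \{\gamma:|\gamma|\le q^n\} = \varpi^{-n}\O_F/\O_F$ (a subgroup of order $q^n$), the partial sum $s_m := \sum_{\gamma\in\Gamma_m}c_\gamma$ equals the value of $w$ on the coset $\varpi^m\O_F$, since $\gamma(x)=1$ for $\gamma\in\Gamma_m$, $x\in\varpi^m\O_F$. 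Hypothesis (1) says $\sum_\gamma c_\gamma = 0$, whence $s_m = -\sum_{|\gamma|>q^m}c_\gamma$ and, for $x\in\varpi^m\O_F$,
\[
w(x) = s_m + \!\!\sum_{|\gamma|>q^m}\!\!c_\gamma\,\overline{\gamma(x)} = \!\!\sum_{|\gamma|>q^m}\!\!c_\gamma\big(\overline{\gamma(x)}-1\big),\qquad\text{so}\qquad \sup_{\varpi^m\O_F}w\le 2\!\!\sum_{|\gamma|>q^m}\!\!|c_\gamma| = O(q^{-m}).
\]
In particular $w(0)=0$ and, crucially, $w(x)=O(|x|)$ as $x\to 0$: the tail rate in (2) is exactly what cancels the constant term $s_m$ and pins the vanishing of $w$ to first order in $|x|$.

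Finally I would convert this decay into the approximation. Let $w_n$ be the conditional expectation of $w$ onto the $\sigma$-algebra generated by the cosets of $\varpi^n\O_F$; since characters $\gamma\notin\Gamma_n$ average to $0$ over such cosets, $w_n = \sum_{\gamma\in\Gamma_n}c_\gamma\overline{\gamma}$ is a step function, constant on each coset, with value $q^n\mu_X(\text{coset})\ge 0$ there. For the coset-constant function $h_n := (\int_{\O_F}w_n^{-1}\,\mathrm{d}\mathfrak{m})^{-1}w_n^{-1}$ one has $\int h_n\,\mathrm{d}\mathfrak{m}=1$, so $g_n := 1 - h_n$ is a mean-zero trigonometric polynomial, i.e.\ $g_n\in\spann\{e_\gamma:\gamma\in\Gamma_n\setminus\{0\}\}$; because $|h_n|^2$ is coset-constant, $\|e_0-g_n\|_{L^2(\mu_X)}^2 = \int|h_n|^2 w\,\mathrm{d}\mathfrak{m} = \int|h_n|^2 w_n\,\mathrm{d}\mathfrak{m} = (\int_{\O_F}w_n^{-1}\,\mathrm{d}\mathfrak{m})^{-1}$. (If some coset has zero mass, $w_n$ vanishes there and $e_0$ already lies in the span.) It thus suffices to show $\int_{\O_F}w_n^{-1}\,\mathrm{d}\mathfrak{m}\to\infty$.

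I would prove this divergence by a spherical decomposition. A sphere $\{|x|=q^{-k}\}$ with $1\le k\le n-1$ is a union of $q^{n-k}(1-q^{-1})$ cosets, on each of which $w_n\le\sup_{\varpi^k\O_F}w\le Cq^{-k}$ by the estimate above, so this sphere contributes at least $q^{n-k}(1-q^{-1})\cdot q^{-n}\cdot C^{-1}q^{k} = C^{-1}(1-q^{-1})$ to $\int w_n^{-1}\,\mathrm{d}\mathfrak{m}$. Summing over $k=1,\dots,n-1$ gives $\int_{\O_F}w_n^{-1}\,\mathrm{d}\mathfrak{m}\ge C^{-1}(1-q^{-1})(n-1)\to\infty$, hence $\|e_0-g_n\|_{L^2(\mu_X)}\to 0$ and $\mathbf 1\in\overline{\spann}\{e_\gamma:\gamma\neq 0\}$, which is minimality. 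This is just the divergence $\int_{\O_F}|x|^{-1}\,\mathrm{d}\mathfrak{m} = \sum_k(1-q^{-1}) = \infty$ in disguise, and the main point to get right is the quantitative step of the second paragraph: the $O(q^{-n})$ rate places $w$ precisely on the borderline where $\int 1/w$ still diverges, so the calibration of the exponent, rather than any soft argument, is what makes the proof work.
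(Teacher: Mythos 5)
Your proof is correct, and while its analytic core coincides with the paper's, the final step takes a genuinely different route. Both arguments pass to the spectral side: you and the paper identify the spectral measure as absolutely continuous with density $w=\sum_\gamma c_\gamma\overline{\gamma}$ (absolute summability from hypothesis (2)), and your estimate $\sup_{\varpi^m\O_F}w\le 2\sum_{|\gamma|>q^m}|c_\gamma|=O(q^{-m})$ is exactly the paper's Lemma \ref{lem-zyg} combined with hypothesis (1), while your spherical summation reproves the paper's Lemma \ref{lem-div}, $\int_{\O_F}|x|^{-1}\,\mathrm{d}\mathfrak{m}=\infty$. The divergence is where the two proofs part ways: the paper feeds $\int_{\O_F}w^{-1}\,\mathrm{d}\mathfrak{m}=\infty$ into the Kolmogorov Criterion (Lemma \ref{lem-Kol}), whose proof it defers to \cite{QB4}; you instead bypass that lemma entirely by building explicit approximants --- the conditional expectation $w_n$ of $w$ onto cosets of $\varpi^n\O_F$, the normalized reciprocal $h_n$, and the mean-zero polynomial $g_n=1-h_n$ with $\|e_0-g_n\|_{L^2(\mu_X)}^2=(\int_{\O_F}w_n^{-1}\,\mathrm{d}\mathfrak{m})^{-1}$. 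In effect you reprove the relevant direction of the Kolmogorov Criterion in the ultrametric setting, where the coset structure makes the construction clean (your $h_n\propto w_n^{-1}$ is precisely the optimal predictor for the step density $w_n$). What this buys is self-containedness: the paper's proof is shorter but rests on a cited black box, whereas yours needs nothing beyond the spectral isometry. Two small points of presentation: the phrase ``$s_m$ equals the value of $w$ on the coset $\varpi^m\O_F$'' is loose (it is only the contribution of the frequencies in $\Gamma_m$; the displayed identity that follows is what you actually use), and the parenthetical about a coset $C$ of zero $\mu_X$-mass deserves one more line --- there $\1_C=0$ in $L^2(\mu_X)$, and expanding $\1_C=q^{-n}e_0+\sum_{\gamma\in\Gamma_n\setminus\{0\}}a_\gamma e_\gamma$ exhibits $e_0$ in the span of the nonzero characters. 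Neither affects correctness.
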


The proof of Proposition \ref{prop-min} is based on  the following Lemma \ref{lem-Kol} due to Kolmogorov. The proof of Lemma \ref{lem-Kol} is similar to  the proof of the same result for $\Z^d$-indexed stochastic processes and the reader is referred to \cite[Lemma 2.1]{QB4}. 
\begin{lem}[The Kolmogorov Criterion]\label{lem-Kol}
 Let  $X = (X_\gamma)_{\gamma\in\Gamma}$ be a weakly $\Gamma$-stationary stochastic process such that $\E(X_\gamma) =\E(X_0)= 0$.   Assume that  the spectral measure $\mu_X$ of $X$ has the Lebesgue decomposition: 
  \[
 \mu_X = \mu_X^a + \mu_X^s,
 \]
 where $\mu_X^a$ and $\mu_X^s$ are the absolutely continuous and the singular parts of $\mu_X$ respectively with respect to $\mathfrak{m}$.  Then the least $L^2(\Omega, \PP)$-distance between $X_0$ and the space  $\overline{\spann}^{L^2} \{X_\gamma: \gamma \in \Gamma \setminus \{0\}\}$ is given by 
\[
\mathrm{dist}(X_0,   \overline{\spann}^{L^2} \{X_\gamma: \gamma \in \Gamma \setminus \{0\}\} ) =   \left(  \int_{\O_F}  \Big( \frac{d \mu_X^a}{d\mathfrak{m}} (x) \Big)^{-1}  d \mathfrak{m} (x)  \right)^{-1/2},
\]
where  the right side is to be interpreted as zero if 
 \begin{align*}
 \int_{\O_F}  \Big( \frac{d \mu_X^a}{d\mathfrak{m}} (x) \Big)^{-1}  d \mathfrak{m} (x) = \infty. 
 \end{align*}
\end{lem}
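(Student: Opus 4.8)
The plan is to transport the entire problem to the spectral side via the Kolmogorov isomorphism and then solve a deterministic interpolation problem in $L^2(\O_F, \mu_X)$. Since $\langle X_\gamma, X_{\gamma'}\rangle_{L^2(\Omega,\PP)} = \Cov(X_\gamma, X_{\gamma'}) = \int_{\O_F}\gamma(x)\overline{\gamma'(x)}\,d\mu_X(x)$ by \eqref{def-sp}, the Gram matrices of the family $(X_\gamma)_{\gamma\in\Gamma}$ and of the characters $(\gamma)_{\gamma\in\Gamma}$ (viewed as functions on $\O_F$) coincide. Hence the assignment $X_\gamma \mapsto \gamma$ extends to a surjective linear isometry $\overline{\spann}^{L^2(\Omega,\PP)}\{X_\gamma:\gamma\in\Gamma\}\to L^2(\O_F,\mu_X)$; surjectivity holds because, by Stone--Weierstrass, the linear span of the characters is a dense conjugation-closed subalgebra of $C(\O_F)$ and hence dense in $L^2(\O_F,\mu_X)$. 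Under this isometry $X_0$ corresponds to the constant function $1$ (the trivial character), so the distance to be computed equals $\mathrm{dist}_{L^2(\mu_X)}(1, H)$, where $H := \overline{\spann}^{L^2(\mu_X)}\{\gamma:\gamma\in\Gamma\setminus\{0\}\}$.

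The key step is to determine the residual $p := 1 - P_H 1 = P_{H^\perp}1$, whose norm is exactly the sought distance. By construction $p\perp\gamma$ in $L^2(\mu_X)$ for every $\gamma\neq 0$, that is, all nonzero Fourier--Stieltjes coefficients of the finite complex measure $p\,d\mu_X$ vanish. Since $\O_F$ is a compact abelian group and its normalized Haar measure satisfies $\int_{\O_F}\overline{\gamma}\,d\mathfrak{m} = \delta_{\gamma,0}$, the uniqueness theorem for Fourier--Stieltjes transforms forces $p\,d\mu_X = c\,\mathfrak{m}$ for some constant $c$. Writing the Lebesgue decomposition $d\mu_X = w\,d\mathfrak{m} + d\mu_X^s$ with $w := d\mu_X^a/d\mathfrak{m}$ and comparing the absolutely continuous and the singular parts of the identity $p\,w\,d\mathfrak{m} + p\,d\mu_X^s = c\,\mathfrak{m}$, I obtain $p = 0$ $\mu_X^s$-almost everywhere and $p = c/w$ $\mathfrak{m}$-almost everywhere on $\{w>0\}$.

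It remains to compute $c$ and the norm. Since $P_H 1 \in H$ is orthogonal to $p\in H^\perp$, one has $\|p\|_{L^2(\mu_X)}^2 = \langle 1, p\rangle_{L^2(\mu_X)} = \overline{\int_{\O_F} p\,d\mu_X} = \bar c$; on the other hand $\|p\|^2 = \int_{\O_F}|p|^2 w\,d\mathfrak{m} = |c|^2\int_{\O_F}w^{-1}\,d\mathfrak{m} = |c|^2 I$, where $I := \int_{\O_F}w^{-1}\,d\mathfrak{m}$. Thus $\bar c = |c|^2 I$. To pin down $c$ I use an elementary lower bound: for any finite combination $g = \sum_{\gamma\neq 0}c_\gamma\gamma$ the function $f = 1 - g$ satisfies $\int_{\O_F}f\,d\mathfrak{m} = 1$, so Cauchy--Schwarz yields $1 \le \big(\int_{\O_F}|f|^2 w\,d\mathfrak{m}\big)^{1/2}I^{1/2}\le \|f\|_{L^2(\mu_X)}I^{1/2}$, whence $\mathrm{dist}_{L^2(\mu_X)}(1,H)^2\ge I^{-1}$ after passing to the closure. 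When $I<\infty$ this excludes $c=0$, and $\bar c = |c|^2 I$ then gives $c = I^{-1}$ and $\|p\|^2 = I^{-1}$, so the distance is $I^{-1/2}$, as stated. When $I = \infty$, the finiteness $\|p\|^2 = |c|^2 I <\infty$ forces $c = 0$, hence $p = 0$ and $1\in H$, so the distance is $0$; this is precisely the convention in the statement.

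The main obstacle is the structural step of the second paragraph: recognizing that orthogonality of the interpolation residual to all nonzero characters means that its spectrum is concentrated at the origin, so that $p\,d\mu_X$ must be a multiple of Haar measure, which in turn dictates the density $p = c/w$ and decouples the singular part. Once this is available the rest is a short Hilbert-space computation together with the Cauchy--Schwarz bound. Care is only needed to keep the two regimes $I<\infty$ and $I=\infty$ separate and to justify the passage from finite combinations of characters to the closed span $H$; the latter is harmless because all approximations take place in the single norm $\|\cdot\|_{L^2(\mu_X)}$, so continuity of $g\mapsto\|1-g\|_{L^2(\mu_X)}$ suffices.
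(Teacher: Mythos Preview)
Your proof is correct and follows the standard Kolmogorov approach via the spectral isometry $X_\gamma\mapsto\gamma$ into $L^2(\O_F,\mu_X)$, which is precisely the method the paper defers to by citing \cite[Lemma 2.1]{QB4} rather than giving its own argument. The identification of the residual $p$ through the Fourier--Stieltjes uniqueness theorem, the resulting form $p=c/w$ with $p=0$ $\mu_X^s$-a.e., and the determination of $c$ via $\bar c=|c|^2 I$ together with the Cauchy--Schwarz lower bound are all sound; the case split $I<\infty$ versus $I=\infty$ is handled cleanly.
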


\begin{lem}\label{lem-zyg}
Let $(c_\gamma)_{\gamma\in \Gamma }$ be a $\Gamma$-indexed sequence of complex numbers such that 
$$
\sum_{| \gamma|  > q^{n}} | c_\gamma| =  O (q^{-n}), \text{\, as $n \to \infty$.}
$$
Then there exists a constant $C>0$, such that the function $f: \O_F \rightarrow \C$ defined by the formula 
$$
f(x) =  \sum_{\gamma\in \Gamma} c_\gamma \gamma(x)
$$
satisfies 
$$
| f(x+h)- f(x) | \le C|h|, \text{\, for any $x, h \in \O_F$.}
$$
\end{lem}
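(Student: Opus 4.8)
The plan is to estimate $f(x+h)-f(x)$ directly from its character expansion, exploiting the ultrametric structure so that all but finitely many ``low-frequency'' terms cancel exactly, after which the decay hypothesis on $(c_\gamma)$ supplies the $O(|h|)$ bound.

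First I would record that $f$ is well defined. Taking $n=0$ in the hypothesis gives $\sum_{|\gamma|>1}|c_\gamma|<\infty$, and since the only $\gamma\in\Gamma$ with $|\gamma|\le 1$ is $\gamma=0$ (any representative $y\notin\O_F$ has $|y|\ge q$), we obtain $\sum_{\gamma\in\Gamma}|c_\gamma|<\infty$; as $|\gamma(x)|=1$ on $\O_F$, the series converges absolutely and uniformly and $f$ is continuous. Next, for $\gamma=[y]\in\Gamma$ and $x,h\in\O_F$ the pairing is multiplicative, $\gamma(x+h)=\chi(y(x+h))=\chi(yx)\chi(yh)=\gamma(x)\gamma(h)$, so
\[
f(x+h)-f(x)=\sum_{\gamma\in\Gamma}c_\gamma\,\gamma(x)\big(\gamma(h)-1\big).
\]

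The crucial observation is a sharp frequency cutoff. Fix $h\in\O_F$, $h\neq 0$, and write $|h|=q^{-m}$ with $m\ge 0$. If $\gamma=[y]$ satisfies $|\gamma|\le |h|^{-1}=q^{m}$, then $\gamma(h)=1$: this is clear for $\gamma=0$, while for $\gamma\neq 0$ a representative $y\notin\O_F$ has $|yh|=|\gamma|\,|h|\le 1$, i.e.\ $yh\in\O_F$, and since $\chi|_{\O_F}\equiv 1$ we get $\gamma(h)=\chi(yh)=1$. Thus the low-frequency terms vanish exactly, and this uses only the triviality of $\chi$ on $\O_F$, not any finer information about its conductor. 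Consequently only the frequencies $|\gamma|>q^{m}$ survive, and bounding $|\gamma(h)-1|\le 2$ gives
\[
|f(x+h)-f(x)|\le 2\sum_{|\gamma|>q^{m}}|c_\gamma|.
\]

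Finally I would feed this into the decay hypothesis. By assumption there are $C_0>0$ and $N_0$ with $\sum_{|\gamma|>q^{n}}|c_\gamma|\le C_0 q^{-n}$ for $n\ge N_0$; applied at the matching scale $n=m$ this yields $|f(x+h)-f(x)|\le 2C_0 q^{-m}=2C_0|h|$ whenever $|h|\le q^{-N_0}$. The only remaining point, requiring a little bookkeeping rather than any real difficulty, is to absorb the finitely many scales $|h|\in\{q^{-m}:0\le m<N_0\}$ into a single constant: there $|h|\ge q^{-(N_0-1)}$ while $|f(x+h)-f(x)|\le 2\sum_{\gamma}|c_\gamma|$ is bounded by a fixed constant, so enlarging $C$ to $C=\max\big(2C_0,\,2q^{N_0-1}\sum_{\gamma}|c_\gamma|\big)$ gives $|f(x+h)-f(x)|\le C|h|$ for all $x,h\in\O_F$ (the case $h=0$ being trivial). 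I do not anticipate a genuine obstacle here; the entire content of the lemma lies in the exact cancellation of low frequencies granted by the ultrametric together with $\chi|_{\O_F}\equiv 1$, with the hypothesis invoked precisely at $n=m$.
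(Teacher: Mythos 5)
Your proof is correct and follows essentially the same route as the paper's: exploit that $\gamma(h)=1$ whenever $|\gamma|\le |h|^{-1}$ (since $\chi|_{\O_F}\equiv 1$), so only the frequencies $|\gamma|>|h|^{-1}$ contribute to $f(x+h)-f(x)$, and then bound them by the decay hypothesis. If anything, your write-up is more careful than the paper's, which leaves implicit the absolute convergence of the series and the bookkeeping converting the $O(q^{-n})$ hypothesis (valid only for large $n$) into a single constant $C$ valid for all $h\in\O_F$.
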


\begin{proof}
Let $h\in \O_F$ be such that $|h| = q^{-n}$. Notice that if $| \gamma| \le q^{n}$, then $\gamma(h) = 1$.  Therefore,  we have
\begin{align*}
f(x+h)- f(x) = \sum_{\gamma \in \Gamma} c_\gamma ( \gamma(h) - 1) \gamma(x) = \sum_{\gamma \in \Gamma, | \gamma|> q^{n}} c_\gamma ( \gamma(h) - 1) \gamma(x). 
\end{align*}
It follows that 
\begin{align*}
| f(x+h)- f(x) |  \le  \sum_{| \gamma|> q^{n}} c_\gamma | \gamma(h) - 1|  \le 2 \sum_{| \gamma|> q^{n}} c_\gamma  =   O(q^{-n}) = O(|h|).
\end{align*}
\end{proof}

\begin{lem}\label{lem-div}
$\displaystyle{\int_{\O_F}} |h|^{-1} \mathrm{d}\mathfrak{m}(h) = \infty$. 
\end{lem}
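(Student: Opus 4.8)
The plan is to exploit the fact that on $\O_F$ the norm $|\cdot|$ takes only the discrete set of values $\{q^{-n} : n \ge 0\}$, together with $0$. Consequently the integrand $|h|^{-1}$ is constant on each ``annulus'' of fixed norm, and the integral decomposes into a sum over these annuli, which I expect to diverge because each contributes the same positive amount.

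First I would discard the single point $h = 0$ (which has $\mathfrak{m}$-measure zero and hence does not affect the integral) and partition the rest of $\O_F$ into the level sets
\[
S_n = \{ h \in \O_F : |h| = q^{-n}\} = \varpi^n \O_F \setminus \varpi^{n+1}\O_F, \qquad n \ge 0,
\]
which are pairwise disjoint with $\bigcup_{n \ge 0} S_n = \O_F \setminus \{0\}$. Next I would compute $\mathfrak{m}(S_n)$ using the scaling behaviour of Haar measure under the dilation $h \mapsto \varpi h$: since this map rescales $\mathfrak{m}$ by the factor $|\varpi| = q^{-1}$, one has $\mathfrak{m}(\varpi^n \O_F) = q^{-n}\mathfrak{m}(\O_F) = q^{-n}$, and therefore
\[
\mathfrak{m}(S_n) = q^{-n} - q^{-(n+1)} = (1 - q^{-1}) q^{-n}.
\]

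Finally, since $|h|^{-1} = q^n$ for every $h \in S_n$, additivity of the integral over the disjoint pieces yields
\[
\int_{\O_F} |h|^{-1}\, \mathrm{d}\mathfrak{m}(h) = \sum_{n=0}^\infty q^n \,\mathfrak{m}(S_n) = \sum_{n=0}^\infty (1 - q^{-1}) = \infty,
\]
because every term equals the fixed positive constant $1 - q^{-1}$. There is essentially no obstacle here: the only step requiring a moment's care is the measure computation for $S_n$, which rests on the standard scaling property of Haar measure, after which the divergence is immediate from summing a series of constant positive terms.
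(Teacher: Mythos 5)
Your proof is correct and follows essentially the same route as the paper: decompose $\O_F$ (minus the null point $0$) into the spheres $\varpi^n\O_F\setminus\varpi^{n+1}\O_F$ of constant norm $q^{-n}$, compute their Haar measure as $(1-q^{-1})q^{-n}$, and observe that the resulting series has constant positive terms. Your explicit handling of the point $h=0$ and the Haar-scaling justification of $\mathfrak{m}(\varpi^n\O_F)=q^{-n}$ are minor refinements of the same argument.
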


\begin{proof}
Denote  $S(0, q^{-n}): = \varpi^n \O_F \setminus \varpi^{n+1} \O_F$, then we have the partition
\[
\O_F =  \bigsqcup_{n = 0}^\infty  S(0, q^{-n}).
\]
Note that 
\[
\mathfrak{m}(S(0, q^{-n}))= \mathfrak{m}(\varpi^n \O_F) -  \mathfrak{m}(\varpi^{n+1} \O_F)= q^{-n-1}(q-1).
\]
Since any  $ h \in S(0, q^{-n})$ has the absolute value $|h| = q^{-n}$, we have
\begin{align*}
\int_{\O_F} |h|^{-1} \mathrm{d}\mathfrak{m}(h)   = \sum_{n = 0}^\infty  q^{n} \mathfrak{m}(S(0, q^{-n})) = \infty.
\end{align*}
\end{proof}

\begin{proof}[Proof of Proposition \ref{prop-min}]
The spectral measure of $X = (X_\gamma)_{\gamma \in \Gamma}$ is given by 
\[
\mu_X (dx) =\sum_{\gamma\in \Gamma}\Cov(X_\gamma, X_0) \gamma(x)  \mathfrak{m}(dx).
\]
Indeed, one can easily check that the measure defined on the right hand side satisfies the equation \eqref{def-sp}. By the uniqueness of the spectral measure, it must be $\mu_X$.  Note that here,  the spectral measure $\mu_X$ has no singular part with respect to $\mathfrak{m}$. 

Now let 
\[
f(x)  : = \frac{d \mu_X}{d\mathfrak{m}}(x)  =\sum_{\gamma\in \Gamma}\Cov(X_\gamma, X_0) \gamma(x). 
\]
The first condition in Proposition \ref{prop-min} implies that $f(0) =  0$. While the second condition, combined with Lemma \ref{lem-zyg},  implies that there exists $C>0$, such that
$$
f(h)  = | f(h) - f(0)| \le  C | h|.
$$
It follows that 
\[
\int_{\O_F} f(h)^{-1} \mathrm{d}\mathfrak{m}(h) \ge C^{-1}\int_{\O_F} |h|^{-1} \mathrm{d}\mathfrak{m}(h) =  \infty.
\]
This  combined with Lemma \ref{lem-Kol} implies the desired relation \eqref{minimal-rel}.
\end{proof}

\section{Determinantal point processes on $F$}
\subsection{Proofs of Theorem \ref{main-thm} and Theorem \ref{main-thm-bis}}
 For any positive integer $m > 0$, fix a set of representatives $\mathbb{L}_m$ of  the quotient additive group $F /\varpi^{-m}\O_F$. The cosets of $\varpi^{-m}\O_F$ are exactly the closed balls $B(x, q^m)$ with $x \in \mathbb{L}_m$ and we have
\[
F = \bigsqcup_{x\in \mathbb{L}_m} B(x, q^m).
\]
In what follows, we assume that $\mathbb{L}_m$ contains the origin $0\in F$.  We will also identify  $\mathbb{L}_m$ with $F /\varpi^{-m}\O_F$  and equip  $\mathbb{L}_m$ with the same additive group structure as $F /\varpi^{-m}\O_F$. 

Note that we have a natural group isomorphism between two additive groups:
\begin{align}\label{gp-iso}
\begin{array}{ccc}
\mathbb{L}_m = F/\varpi^{-m} \O_F & \longrightarrow & \Gamma = F/\varpi\O_F
\\
x + \varpi^{-m} \O_F &  \mapsto &  \varpi^{m+1} x + \varpi \O_F
\end{array}.
\end{align}
Using the group isomorphism \eqref{gp-iso},  we immediately get a corollary from Proposition \ref{prop-min}.  

\begin{cor}\label{cor-min}
Fix any positive integer $m > 0$. Let $ X =(X_x )_{x \in  \mathbb{L}_m}$ be an $\mathbb{L}_m$-stationary stochastic process defined on a probability space $(\Omega, \PP)$. Assume that 
\begin{itemize}
\item[(1)] $ \sum_{x \in \mathbb{L}_m }\Cov(X_x, X_0) =0$;
\item[(2)] $ \sum_{x\in \mathbb{L}_m: |x |> q^{n}}  | \Cov(X_x, X_0)| = O (q^{-n})$, as $n\to \infty$.
\end{itemize}
Then  $X$ is Kolmogorov minimal, that is, 
\begin{align*}
 X_0  - \E(X_0) \in \overline{\spann}^{L^2(\Omega, \PP)} \Big\{X_x  -\E(X_x) :  x \in \mathbb{L}_m \setminus \{0\}\Big\}. 
 \end{align*}
\end{cor}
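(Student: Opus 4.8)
The plan is to deduce Corollary \ref{cor-min} directly from Proposition \ref{prop-min} by transporting the hypotheses through the group isomorphism \eqref{gp-iso}. The key observation is that Proposition \ref{prop-min} is already proved for an arbitrary $\Gamma$-stationary process, where $\Gamma = F/\varpi\O_F$, so the entire task reduces to checking that the stated isomorphism $\Phi: \mathbb{L}_m \to \Gamma$, given by $x + \varpi^{-m}\O_F \mapsto \varpi^{m+1}x + \varpi\O_F$, is both a group isomorphism and compatible with the relevant metric structure. Concretely, I would first set $Y_\gamma := X_{\Phi^{-1}(\gamma)}$ for $\gamma \in \Gamma$ and note that since $\Phi$ is a group isomorphism, the process $(Y_\gamma)_{\gamma\in\Gamma}$ is $\Gamma$-stationary precisely when $(X_x)_{x\in\mathbb{L}_m}$ is $\mathbb{L}_m$-stationary, and moreover $\Cov(Y_\gamma, Y_0) = \Cov(X_{\Phi^{-1}(\gamma)}, X_0)$.

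The main point requiring care is the behaviour of the absolute values under $\Phi$, since hypothesis (2) is a decay condition phrased in terms of $|x|$ on $\mathbb{L}_m$ versus $|\gamma|$ on $\Gamma$. I would verify that $\Phi$ scales absolute values by a fixed power of $q$: for $x \notin \varpi^{-m}\O_F$ one has $|\Phi(x+\varpi^{-m}\O_F)| = |\varpi^{m+1}x| = q^{-(m+1)}|x|$, using $|\varpi| = q^{-1}$ and the definition of $|\cdot|$ on the quotient groups. Thus the condition $|x| > q^n$ on $\mathbb{L}_m$ translates to $|\gamma| > q^{n - (m+1)}$ on $\Gamma$. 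Since $m$ is a fixed integer, the shift by $m+1$ in the exponent does not affect the $O(q^{-n})$ rate: if $\sum_{x\in\mathbb{L}_m:\,|x|>q^n}|\Cov(X_x,X_0)| = O(q^{-n})$, then the corresponding sum over $\Gamma$ tails satisfies the same order estimate up to the fixed multiplicative constant $q^{m+1}$, so hypothesis (2) of Proposition \ref{prop-min} holds for $(Y_\gamma)$. Hypothesis (1) transports verbatim, as $\sum_{x\in\mathbb{L}_m}\Cov(X_x,X_0) = \sum_{\gamma\in\Gamma}\Cov(Y_\gamma,Y_0)$ by reindexing through the bijection $\Phi$.

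Having checked both hypotheses for $(Y_\gamma)_{\gamma\in\Gamma}$, I would invoke Proposition \ref{prop-min} to conclude that $(Y_\gamma)$ is Kolmogorov minimal, i.e. $Y_0 - \E(Y_0) \in \overline{\spann}^{L^2}\{Y_\gamma - \E(Y_\gamma): \gamma\in\Gamma\setminus\{0\}\}$. Translating this back through $\Phi$, and using $Y_0 = X_0$ together with the bijection $\Phi: \mathbb{L}_m\setminus\{0\} \to \Gamma\setminus\{0\}$ (which holds because $\Phi$ maps the identity coset to the identity coset), gives exactly the desired relation $X_0 - \E(X_0) \in \overline{\spann}^{L^2}\{X_x - \E(X_x): x\in\mathbb{L}_m\setminus\{0\}\}$. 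The only genuine subtlety, and the step I would be most careful about, is the bookkeeping of the absolute-value scaling under $\Phi$ and confirming that the fixed offset $m+1$ is absorbed into the implied constant in the big-$O$; everything else is a formal transport of structure along a group isomorphism.
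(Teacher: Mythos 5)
Your proposal is correct and takes essentially the same route as the paper: the paper deduces the corollary from Proposition \ref{prop-min} ``immediately'' via the group isomorphism \eqref{gp-iso}, and your argument simply spells out the transport of structure (stationarity, reindexing for hypothesis (1), and the fixed $q$-power rescaling of absolute values for hypothesis (2)) that the paper leaves implicit. The only point worth flagging is the paper's own notational slip---$\Gamma$ is defined as $F/\O_F$ in Section 3 but written as $F/\varpi\O_F$ in \eqref{gp-iso}, so the scaling exponent is $m$ or $m+1$ depending on which convention one fixes---but, as you correctly observe, any fixed power of $q$ is absorbed into the implied constant of the $O(q^{-n})$ bound, so the argument is unaffected.
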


Let $S\subset F$ be a measurable subset such that $ 0 < \mathfrak{m}(S) < \infty$. Let 
\[
K_S(x, y) = \widehat{\1_{S}}(x-y), \text{\, for $x, y \in F.$}
\]
Then the kernel $K_S$ induces a determinantal point process on $F$. In what follows, let $\mathcal{X}_S$ be a determinantal point process induced by the correlation kernel $K_S(x, y)$.  The translation invariance of the kernel 
\[
K_S(x, y) = K_S(x +z, y+z)
\]
implies that the determinantal point process $\X_S$ is translation-invariant.

For each $x \in \mathbb{L}_m$, we set a random variable
\begin{align}\label{n-x}
N_x^{(m)}: = \#(\mathcal{X}_S \cap B(x, q^m)). 
\end{align}
Since the law of  $\X_S$ is invariant under the  translations $x \in \mathbb{L}_m$, the stochastic process $(N_x^{(m)})_{x\in \mathbb{L}_m}$ is also $\mathbb{L}_m$-stationary. 

For simplifying the notation, in what follows, when $m$ is clear from the context,  we will denote $N_x^{(m)}$ by $N_x$.

\begin{lem}\label{lem-vanish}
For any measurable subset $S \subset F$ such that $0 <\mathfrak{m}(S) < \infty$,  we have 
\begin{align}\label{sum-0}
\sum_{x \in \mathbb{L}_m} \Cov(N_x, N_0) =0.
\end{align}
\end{lem}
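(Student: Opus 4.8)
The plan is to use the standard determinantal formula for the covariance of the counting statistics of two Borel sets, and then exploit the fact that the balls $B(x, q^m)$, $x \in \mathbb{L}_m$, partition $F$, so that summing over $x$ collapses a double integral into a single integral over all of $F$; the Plancherel identity then makes the two surviving terms cancel.

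First I would record the covariance identity. For the determinantal point process $\mathcal{X}_S$ with kernel $K_S$ and two measurable sets $A, B$ of finite measure, applying the defining relation \eqref{def-DPP} with $n = 1$ and $n = 2$ gives
\[
\Cov\big(\#(\mathcal{X}_S \cap A), \#(\mathcal{X}_S \cap B)\big) = \int_{A \cap B} K_S(u,u)\,\mathrm{d}\mathfrak{m}(u) - \int_A \int_B |K_S(u,v)|^2\,\mathrm{d}\mathfrak{m}(u)\,\mathrm{d}\mathfrak{m}(v),
\]
the $K_S(u,u)K_S(v,v)$ contribution of the $2\times 2$ determinant cancelling against the product of the two first moments. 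Here one uses that $K_S$ is Hermitian, which follows from $\overline{\widehat{\1_S}(v-u)} = \widehat{\1_S}(u-v)$.

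Applying this with $A = B(x, q^m)$ and $B = B(0, q^m)$, and recalling that distinct cosets of $\varpi^{-m}\O_F$ are disjoint, the diagonal term survives only for $x = 0$, where $A \cap B = B(0, q^m)$. Summing over $x \in \mathbb{L}_m$ and using $F = \bigsqcup_{x\in\mathbb{L}_m} B(x, q^m)$ to merge the outer summation with the $v$-integral, I obtain
\[
\sum_{x \in \mathbb{L}_m} \Cov(N_x, N_0) = \int_{B(0,q^m)} K_S(u,u)\,\mathrm{d}\mathfrak{m}(u) - \int_{B(0,q^m)} \Big( \int_F |K_S(u,v)|^2\,\mathrm{d}\mathfrak{m}(v) \Big)\, \mathrm{d}\mathfrak{m}(u).
\]
It then remains to evaluate the two terms. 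On the diagonal, $K_S(u,u) = \widehat{\1_S}(0) = \mathfrak{m}(S)$ is constant. For the inner integral, the substitution $w = u - v$ and the Plancherel theorem for the non-Archimedean Fourier transform give $\int_F |\widehat{\1_S}(u-v)|^2\,\mathrm{d}\mathfrak{m}(v) = \|\widehat{\1_S}\|_{L^2}^2 = \|\1_S\|_{L^2}^2 = \mathfrak{m}(S)$, again independent of $u$. Hence both terms equal $\mathfrak{m}(S)\,\mathfrak{m}(B(0,q^m))$ and cancel, which yields \eqref{sum-0}.

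The step requiring the most care is the interchange of the summation over $x \in \mathbb{L}_m$ with the integration against $|K_S(u,v)|^2$; this is justified by Tonelli's theorem, the integrand being nonnegative, and the resulting integral over $F$ is finite precisely because $\widehat{\1_S} \in L^2(F, \mathfrak{m})$. The only other inputs are the validity of the determinantal covariance formula, guaranteed by the Macch\`i--Soshnikov construction of $\mathcal{X}_S$, and the Plancherel identity in the non-Archimedean setting.
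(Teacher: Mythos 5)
Your proof is correct and follows essentially the same route as the paper: both derive the covariance from the $n=1,2$ cases of the defining relation \eqref{def-DPP}, use disjointness of the balls $B(x,q^m)$ so that only $x=0$ contributes a diagonal term, collapse the sum over $\mathbb{L}_m$ into an integral of $|\widehat{\1_S}(u-v)|^2$ over all of $F$, and cancel the two terms via the Plancherel identity $\int_F |\widehat{\1_S}|^2\,\mathrm{d}\mathfrak{m} = \mathfrak{m}(S)$. The only difference is organizational (you state the general two-set covariance formula first and then specialize, while the paper computes the moments of $N_0$ and $N_0N_x$ directly), and your explicit appeal to Tonelli for the interchange of sum and integral is a point the paper leaves implicit.
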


\begin{proof}
By taking $n=1$ in the formula \eqref{def-DPP}, for any $x\in\mathbb{L}_m$, we have 
\[
\E(N_x) = \E(N_0)= \E\Big(\sum_{u \in \mathcal{X}_S} \1_{B(0, q^{m})} (u)\Big)=\int_{B(0, q^{m})}  \widehat{\1_{S}}(0) \mathrm{d}\mathfrak{m}(z) = q^m \mathfrak{m}(S). 
\]
We can write
\begin{align*}
\E(N_0^2)& = \E\Big(\sum_{u, v  \in \mathcal{X}_S} \1_{B(0, q^{m})} (u) \1_{B(0, q^{m})} (v) \Big)
\\
& =\underbrace{ \E\Big(\sum_{u \in \mathcal{X}_S} \1_{B(0, q^{m})} (u)  \Big)}_{ \text{denoted by $I$} } + \underbrace{\E\Big(\sum_{u, v \in \mathcal{X}_S, \, u \ne v} \1_{B(0, q^{m})} (u) \1_{B(0, q^{m})} (v) \Big)}_{\text{denoted by $II$}}. 
\end{align*}
The first term $I$ has already been shown to be $q^m \mathfrak{m}(S)$. To compute the second term, we take  $n=2$ in \eqref{def-DPP} and get 
\begin{align*}
\\
II &  =  \iint_{B(0, q^{m})^2}  (\widehat{\1_{S}}(0)^2 - |\widehat{\1_{S}}(z_1-z_2) |^2) \mathrm{d}\mathfrak{m}(z_1)\mathrm{d}\mathfrak{m}(z_2)
\\
& =  q^{2m} \mathfrak{m}(S)^2  - \iint_{B(0, q^{m})^2}  |\widehat{\1_{S}}(z_1-z_2) |^2 \mathrm{d}\mathfrak{m}(z_1)\mathrm{d}\mathfrak{m}(z_2).
\end{align*}
Hence 
\begin{align*}
\Cov(N_0, N_0)  & =\Var(N_0, N_0) = \E(N_0^2) - \E(N_0)^2
\\ 
& = q^m \mathfrak{m}(S)   - \iint_{B(0, q^{m})^2}  |\widehat{\1_{S}}(z_1-z_2) |^2 \mathrm{d}\mathfrak{m}(z_1)\mathrm{d}\mathfrak{m}(z_2).
\end{align*}

Note that for $x \in \mathbb{L}_m\setminus\{ 0\}$, the two closed balls $B(0, q^{m})$ and $B(x, q^{m})$ are disjoint. Therefore, if $x \in \mathbb{L}_m\setminus\{ 0\}$, we have 
\begin{align*}
\E(N_0N_x)& =\E\Big(\sum_{u, v \in \mathcal{X}_S } \1_{B(0, q^{m})} (u) \1_{B(x, q^{m})} (v) \Big) 
\\
 & = \E\Big(\sum_{u, v \in \mathcal{X}_S, u \ne v } \1_{B(0, q^{m})} (u) \1_{B(x, q^{m})} (v) \Big). 
\end{align*}
By taking $n =2$ in \eqref{def-DPP}, we get 
\begin{align*}
\E(N_0N_x) & =  \iint_{B(0, q^{m}) \times B(x, q^{m}) }  (\widehat{\1_{S}}(0)^2 - |\widehat{\1_{S}}(z_1-z_2) |^2) \mathrm{d}\mathfrak{m}(z_1)\mathrm{d}\mathfrak{m}(z_2)
\\
& =  q^{2m} \mathfrak{m}(S)^2  - \iint_{B(0, q^{m})\times B(x, q^{m}) }  |\widehat{\1_{S}}(z_1-z_2) |^2 \mathrm{d}\mathfrak{m}(z_1)\mathrm{d}\mathfrak{m}(z_2).
\end{align*}
Consequently, 
\begin{align}\label{cov-Qp}
\Cov (N_x, N_0) = - \iint_{B(0, q^{m}) \times    B(x, q^{m})} |\widehat{\1_{S}}(z_1-z_2) |^2 \mathrm{d}\mathfrak{m}(z_1)\mathrm{d}\mathfrak{m}(z_2).
\end{align}
Finally, by using the equality
\begin{align*}
& \sum_{ x\in  \mathbb{L}_m}\iint_{B(0, q^{m}) \times    B(x, q^{m})} |\widehat{\1_{S}}(z_1-z_2) |^2 \mathrm{d}\mathfrak{m}(z_1)\mathrm{d}\mathfrak{m}(z_2)
\\
& = \iint_{B(0, q^{m}) \times F} |\widehat{\1_{S}}(z_1-z_2) |^2 \mathrm{d}\mathfrak{m}(z_1)\mathrm{d}\mathfrak{m}(z_2)  
\\
&   =  \mathfrak{m}(B(0, q^{m})) \int_{F} |\1_{S}|^2 \mathrm{d}\mathfrak{m}  = q^m \mathfrak{m}(S),
\end{align*}
we obtain the desired equality \eqref{sum-0}. 
\end{proof}

\begin{lem}\label{lem-av-decay}
Let $n$ be an integer such that $n> m$.  We have 
\begin{align}\label{av-decay}
\sum_{x \in \mathbb{L}_m; \, |x |> q^{n}}  | \Cov(N_x, N_0)|  = q^m\int_{F\setminus B(0, q^{n})} |\widehat{\1_{S}} |^2 \mathrm{d}\mathfrak{m}.
\end{align}
\end{lem}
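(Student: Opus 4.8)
The plan is to build directly on the covariance formula \eqref{cov-Qp} obtained in the proof of Lemma \ref{lem-vanish}. Since the right-hand side of \eqref{cov-Qp} is the negative of an integral of the nonnegative function $|\widehat{\1_{S}}(z_1-z_2)|^2$, the covariance $\Cov(N_x, N_0)$ is nonpositive for every $x \in \mathbb{L}_m \setminus \{0\}$, so that
\[
|\Cov(N_x, N_0)| = \iint_{B(0,q^m) \times B(x, q^m)} |\widehat{\1_{S}}(z_1 - z_2)|^2 \, \mathrm{d}\mathfrak{m}(z_1)\mathrm{d}\mathfrak{m}(z_2).
\]
Summing this identity over the index set $\{x \in \mathbb{L}_m : |x| > q^n\}$ reduces the lemma to evaluating a single double integral, and the remaining work is purely geometric.

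First I would establish the key partition: as $x$ runs over $\{x \in \mathbb{L}_m : |x| > q^n\}$, the closed balls $B(x, q^m)$ give a disjoint decomposition of $F \setminus B(0, q^n)$. This uses the hypothesis $n > m$ together with the ultrametric property. Indeed, if $z \in B(x, q^m) = x + \varpi^{-m}\O_F$ with $|x| > q^n > q^m$, then $z = x + w$ with $|w| \le q^m < |x|$, whence $|z| = |x| > q^n$; conversely each $z$ with $|z| > q^n$ lies in a unique coset $B(x, q^m)$ whose representative satisfies $|x| = |z| > q^n$. Using this decomposition I obtain
\[
\sum_{x \in \mathbb{L}_m: |x| > q^n} |\Cov(N_x, N_0)| = \iint_{B(0, q^m) \times (F \setminus B(0, q^n))} |\widehat{\1_{S}}(z_1-z_2)|^2 \, \mathrm{d}\mathfrak{m}(z_1)\mathrm{d}\mathfrak{m}(z_2).
\]

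Next I would evaluate the inner integral by a translation argument. Fix $z_1 \in B(0, q^m)$ and consider the involution $z_2 \mapsto z_1 - z_2$, which is measure-preserving. Since $|z_1| \le q^m < q^n$, the ultrametric inequality gives $|z_1 - z_2| = |z_2| > q^n$ whenever $|z_2| > q^n$, so this map sends $F \setminus B(0, q^n)$ bijectively onto itself. Hence the inner integral $\int_{F \setminus B(0, q^n)} |\widehat{\1_{S}}(z_1 - z_2)|^2 \,\mathrm{d}\mathfrak{m}(z_2)$ equals $\int_{F \setminus B(0, q^n)} |\widehat{\1_{S}}(w)|^2 \,\mathrm{d}\mathfrak{m}(w)$, independently of $z_1$. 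Integrating over $z_1 \in B(0, q^m)$ and using $\mathfrak{m}(B(0, q^m)) = q^m$ then yields the desired identity \eqref{av-decay}.

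The computation is elementary once the two ultrametric facts are in place, so I do not expect a genuine obstacle. The only point requiring care is the bookkeeping in the partition step: one must check that $n > m$ is precisely what guarantees both that each far ball $B(x, q^m)$ lies entirely inside $F \setminus B(0, q^n)$ and that these balls exhaust that region. The translation-invariance of $F \setminus B(0, q^n)$ under shifts from $B(0, q^m)$, used in the final step, is the second place where the condition $|z_1| < q^n$ (equivalently $n > m$) is essential.
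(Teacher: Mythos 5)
Your proof is correct and follows essentially the same route as the paper's: both start from the covariance formula \eqref{cov-Qp} and combine the ultrametric coset partition associated with $\mathbb{L}_m$ with translation invariance of the Haar measure. The only cosmetic difference is the order of operations: the paper first reduces each term to $q^m\int_{B(x,q^m)}|\widehat{\1_S}|^2\,\mathrm{d}\mathfrak{m}$ and then sums via the partition of $B(0,q^n)$ by the \emph{near} balls (subtracting from the total integral), whereas you sum first via the complementary partition of $F\setminus B(0,q^n)$ by the \emph{far} balls and then apply the measure-preserving involution $z_2\mapsto z_1-z_2$; the two computations are interchangeable.
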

\begin{proof}
Let $x \in \mathbb{L}_m$. Then for any $z \in B(0, q^{m})$, we have 
\[
z + B(x, q^{m})  = B(x, q^m).
\]
Since $\mathfrak{m}$ is the Haar measure on $F$, the restriction $\mathfrak{m}|_{B(x, q^m)}$ is invariant under the translation of $z$. Therefore, the equality \eqref{cov-Qp} can be written as
 \begin{align}\label{f-red}
\Cov (N_x, N_0)  = - q^m\int_{B(x, q^{m})} |\widehat{\1_{S}}|^2 \mathrm{d}\mathfrak{m}.
\end{align}
It follows that 
\begin{align*}
& \sum_{x \in \mathbb{L}_m; \, |x |> q^{n}}  | \Cov(N_x, N_0)|   =  \sum_{x \in \mathbb{L}_m; \, |x |> q^{n}} q^m\int_{B(x, q^{m})} |\widehat{\1_{S}} |^2 \mathrm{d}\mathfrak{m}
\\ 
 &=  q^m\int_{F} |\widehat{\1_{S}} |^2 \mathrm{d}\mathfrak{m} -  \sum_{x  \in \mathbb{L}_m; \,  |x|\le q^{n}} q^m\int_{B(x, q^{m})} |\widehat{\1_{S}} |^2 \mathrm{d}\mathfrak{m}. 
 \end{align*}
By applying the partition 
 \begin{align*}
\bigsqcup_{x \in \mathbb{L}_m; \,  |x|\le q^{n}}B(x, q^{m})= B(0, q^n).
\end{align*}
we obtain the desired equality
\begin{align*}
 \sum_{x \in \mathbb{L}_m; \, |x |> q^{n}}  | \Cov(N_x, N_0)| &  =  q^m\int_{F} |\widehat{\1_{S}} |^2 \mathrm{d}\mathfrak{m} -   q^m\int_{B(0, q^{n})} |\widehat{\1_{S}} |^2 \mathrm{d}\mathfrak{m}
\\
& = q^m\int_{F\setminus B(0, q^{n})} |\widehat{\1_{S}} |^2 \mathrm{d}\mathfrak{m}.
 \end{align*}
\end{proof}

\begin{lem}\label{lem-ref}
Let $n$ be an integer such that $n> m$.  We have 
\begin{align}\label{cont-analogue}
\sum_{x \in \mathbb{L}_m; \, |x |> q^{n}}  | \Cov(N_x, N_0)|  = q^m \dashint_{B(0, q^{-n})}  \mathfrak{m}(   S\setminus  (S+y))\mathrm{d}\mathfrak{m}(y).
\end{align}
\end{lem}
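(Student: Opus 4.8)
The plan is to reduce the claim to a purely Fourier-analytic identity and then prove the latter by a single direct computation, the only genuinely non-Archimedean ingredient being the self-duality of balls in $F$. By Lemma~\ref{lem-av-decay} we already know that for $n>m$
\[
\sum_{x \in \mathbb{L}_m; \, |x| > q^{n}} |\Cov(N_x, N_0)| = q^m \int_{F \setminus B(0, q^n)} |\widehat{\1_{S}}|^2 \, \mathrm{d}\mathfrak{m},
\]
so, comparing with the right-hand side of \eqref{cont-analogue}, it suffices to establish the analytic identity $\int_{F \setminus B(0, q^n)} |\widehat{\1_{S}}|^2 \, \mathrm{d}\mathfrak{m} = \dashint_{B(0, q^{-n})} \mathfrak{m}(S \setminus (S+y)) \, \mathrm{d}\mathfrak{m}(y)$. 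Writing $\mathfrak{m}(S\setminus(S+y)) = \mathfrak{m}(S) - \mathfrak{m}(S \cap (S+y))$ and invoking the Plancherel identity $\int_F |\widehat{\1_{S}}|^2 \mathrm{d}\mathfrak{m} = \mathfrak{m}(S)$, this reduces further to the core claim
\[
\int_{B(0, q^n)} |\widehat{\1_{S}}|^2 \, \mathrm{d}\mathfrak{m} = \dashint_{B(0, q^{-n})} \mathfrak{m}(S \cap (S+y)) \, \mathrm{d}\mathfrak{m}(y).
\]

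To prove this last identity, I would expand the left-hand side as a triple integral, using $|\widehat{\1_{S}}(\xi)|^2 = \iint_{S \times S} \chi((x-x')\xi) \,\mathrm{d}\mathfrak{m}(x)\, \mathrm{d}\mathfrak{m}(x')$, and carry out the $\xi$-integration over $B(0,q^n) = \varpi^{-n}\O_F$ first. The needed input is the orthogonality relation $\int_{\O_F} \chi(wu)\, \mathrm{d}\mathfrak{m}(u) = \1_{\O_F}(w)$, which after the rescaling $\xi = \varpi^{-n}v$ yields
\[
\int_{B(0, q^n)} \chi(w\xi) \, \mathrm{d}\mathfrak{m}(\xi) = q^n \1_{B(0, q^{-n})}(w).
\]
Substituting $w = x - x'$ collapses the $\xi$-integral and produces $q^n \iint_{S\times S} \1_{B(0,q^{-n})}(x - x') \,\mathrm{d}\mathfrak{m}(x)\,\mathrm{d}\mathfrak{m}(x')$. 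The change of variable $y = x - x'$, together with the fact that $B(0,q^{-n})$ is a symmetric additive subgroup (so that the sign of $y$ is immaterial), identifies this with $q^n \int_{B(0,q^{-n})} \mathfrak{m}(S\cap(S+y))\,\mathrm{d}\mathfrak{m}(y)$, and since $\mathfrak{m}(B(0,q^{-n})) = q^{-n}$ this is exactly the normalized integral $\dashint_{B(0,q^{-n})} \mathfrak{m}(S\cap(S+y))\,\mathrm{d}\mathfrak{m}(y)$, as required.

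The interchange of the order of integration is harmless here: the integrands are nonnegative and $\1_S \otimes \1_S$ is integrable on $S \times S$ (as $\mathfrak{m}(S) < \infty$) while the $\xi$-domain $B(0,q^n)$ has finite measure, so Fubini applies. I do not expect any serious obstacle; the entire content lies in tracking the Haar-measure and Fourier normalizations correctly, and in the self-dual rescaling $\widehat{\1_{B(0,q^{-n})}} = q^{-n}\1_{B(0,q^n)}$, which is precisely where the non-Archimedean geometry enters. An equivalent route would be to recognize $y \mapsto \mathfrak{m}(S\cap(S+y))$ as the autocorrelation of $\1_S$ and to combine the Wiener–Khinchin relation $\widehat{\mathfrak{m}(S\cap(S+\cdot))} = |\widehat{\1_S}|^2$ with Parseval against the approximate identity $q^n\1_{B(0,q^{-n})}$ (whose Fourier transform is $\1_{B(0,q^n)}$); the direct computation above, however, keeps the argument self-contained.
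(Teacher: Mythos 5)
Your proof is correct, and while it shares the paper's overall skeleton---both start from Lemma \ref{lem-av-decay} and reduce matters to the single identity $\int_{B(0, q^{n})} |\widehat{\1_{S}}|^2 \, \mathrm{d}\mathfrak{m} = \dashint_{B(0, q^{-n})} \mathfrak{m}\bigl(S \cap (S+y)\bigr)\, \mathrm{d}\mathfrak{m}(y)$---you establish that identity by a genuinely different computation. The paper writes $\1_{B(0,q^{n})} = q^{n}\,\widehat{\1_{B(0,q^{-n})}}$, recognizes $\widehat{\1_{S}}\cdot\widehat{\1_{B(0,q^{-n})}}$ as the Fourier transform of the convolution $\1_{S} * \1_{B(0,q^{-n})}$, applies Parseval, and then must reduce a two-variable integral $\iint_{B(0,q^{-n})^2} \mathfrak{m}\bigl((S+y)\cap(S+y')\bigr)$ to a one-variable one via translation invariance of the Haar measure on the ball. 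You instead expand $|\widehat{\1_{S}}(\xi)|^2$ as a double integral of characters over $S\times S$, integrate in $\xi$ first via the orthogonality relation $\int_{B(0,q^{n})} \chi(w\xi)\,\mathrm{d}\mathfrak{m}(\xi) = q^{n}\1_{B(0,q^{-n})}(w)$, and collapse directly to a single $y$-integral; this avoids both the convolution theorem and the two-variable reduction, making the argument more elementary and self-contained (your closing remark about Wiener--Khinchin plus Parseval against $q^{n}\1_{B(0,q^{-n})}$ is, in effect, the paper's actual proof). Both routes rest on the same non-Archimedean fact---the Fourier transform of a ball indicator is a multiple of a ball indicator---and both need Plancherel for $\int_{F}|\widehat{\1_{S}}|^2\,\mathrm{d}\mathfrak{m} = \mathfrak{m}(S)$. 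One cosmetic slip: in your Fubini justification the integrand $\chi\bigl((x-x')\xi\bigr)$ is unimodular, not nonnegative; what you actually need (and have) is that it is bounded and the domain $S \times S \times B(0,q^{n})$ has finite measure, so the interchange is indeed legitimate.
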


\begin{proof}
Note that we have (see, e.g. \cite[Lemma 3.3]{BQ-sym})
 \[
\1_{B(0, q^{n})}   =q^n  \widehat{\1_{B(0, q^{-n})}}
\]
and also 
\[
 \widehat{\1_{S}} \cdot  \widehat{\1_{B(0, q^{-n})}}  =   \mathcal{F}(\1_{S} * \1_{B(0, q^{-n})}). 
 \]
Here $\mathcal{F}$ denotes also the Fourier transform.  
Therefore, 
 \begin{align*}
  \int_{B(0, q^{n})} |\widehat{\1_{S}} |^2 \mathrm{d}\mathfrak{m} =& q^{2n} \int_{F} |\widehat{\1_{S}} \cdot  \widehat{\1_{B(0, q^{-n})}}  |^2 \mathrm{d}\mathfrak{m}
 \\
  =& q^{2n} \int_{F} |  \mathcal{F}(  \1_{S} * \1_{B(0, q^{-n})})  |^2 \mathrm{d}\mathfrak{m}. 
  \end{align*}
By Parseval's identity 
  \begin{align*}
& \int_{B(0, q^{n})} |\widehat{\1_{S}} |^2 \mathrm{d}\mathfrak{m} =  q^{2n}\int_{F} |    \1_{S} *  \1_{B(0, q^{-n})}   |^2 \mathrm{d}\mathfrak{m}
 \\
 =& q^{2n}\int_{F} \Bigg[ \int_{B(0, q^{-n})}  \1_S(x-y) \mathrm{d}\mathfrak{m}(y)  \int_{B(0, q^{-n})}  \1_S(x-y') \mathrm{d}\mathfrak{m}(y')\Bigg]\mathrm{d}\mathfrak{m}(x) 
 \\
  = & q^{2n} \int_{B(0, q^{-n})}   \int_{B(0, q^{-n})}  \mathfrak{m}(( S+y) \cap (S+y'))\mathrm{d}\mathfrak{m}(y)\mathrm{d}\mathfrak{m}(y'). 
  \end{align*}
Note that  $B(0, q^{-n})$ is an additive group and for any $y, y' \in B(0, q^{-n})$,  we have 
\[
 \mathfrak{m}(( S+y) \cap (S+y')) =  \mathfrak{m}(( S+(y - y')) \cap S). 
\]
Since the restriction $\mathfrak{m}|_{B(0, q^{-n})}$ is invariant under the translations of $y'\in B(0, q^{-n})$, we have  
  \begin{align*}
 \int_{B(0, q^{n})} |\widehat{\1_{S}} |^2 \mathrm{d}\mathfrak{m}
  =&  q^{2n}\mathfrak{m}(B(0, q^{-n})) \int_{B(0, q^{-n})}  \mathfrak{m}(( S+y) \cap S)\mathrm{d}\mathfrak{m}(y) 
 \\
  =& \dashint_{B(0, q^{-n})}  \mathfrak{m}(( S+y) \cap S)\mathrm{d}\mathfrak{m}(y).
 \end{align*}
The  equality  \eqref{av-decay} combined with the following equality
\[
\int_{F} |\widehat{\1_{S}} |^2 \mathrm{d}\mathfrak{m}  = \mathfrak{m}(S) = \dashint_{B(0, q^{-n})}  \mathfrak{m}(S) \mathrm{d}\mathfrak{m}(y)
\]
yields the desired equality 
 \begin{align*}
  & \sum_{x \in \mathbb{L}_m; \, |x |> q^{n}}  | \Cov(N_x, N_0)|   = q^m\int_{F\setminus B(0, q^{n})} |\widehat{\1_{S}} |^2 \mathrm{d}\mathfrak{m} 
 \\
 &   =  q^{m} \dashint_{B(0, q^{-n})}  \Big(\mathfrak{m}(S) - \mathfrak{m}(( S+y) \cap S)\Big)\mathrm{d}\mathfrak{m}(y)
 \\
 & = q^{m} \dashint_{B(0, q^{-n})}  \mathfrak{m}(S \setminus (S + y) ) \mathrm{d}\mathfrak{m}(y). 
 \end{align*}
\end{proof}

\begin{proof}[Proof of Theorem \ref{main-thm}]
In Definition \ref{def-rigid}, the subset $B$ ranges over all bounded open subsets.  It is easy to see that without changing the definition of number rigidity, we can let $B$  only range over all the closed balls $B(0, q^{m})$ with $m\in \N$.  In our notation \eqref{n-x},  for each $m \in \N$, the number of particles of our determinantal point process $\X_S$ inside $B(0, q^m)$ is denoted by $N_0^{(m)}$, by Corollary \ref{cor-min},  Lemma \ref{lem-vanish} and Lemma \ref{lem-ref}, the assumption \eqref{con-geo} implies 
\begin{align*}
 N_0^{(m)}  - \E(N_0^{(m)}) \in \overline{\spann}^{L^2(\Omega, \PP)} \Big\{N_x^{(m)}  -\E(N_x^{(m)}) :  x \in \mathbb{L}_m \setminus \{0\}\Big\}. 
 \end{align*}
 Thus the random variable  $N_0^{(m)}$ is measurable with respect to the completion of the $\sigma$-algebra generated by  the family $\{N_x^{(m)}:  x \in \mathbb{L}_m \setminus \{0\}\}$. Therefore,  $N_0^{(m)}$ is almost surely determined by $\X_S \setminus B(0, q^m)$.  Since $m$ is arbitrary,  we complete the proof of the number rigidity of $\X_S$. 
\end{proof}

\begin{proof}[Proof of Theorem \ref{main-thm-bis}]
In the proof of Theorem \ref{main-thm}, replacing Lemma \ref{lem-ref} by Lemma \ref{lem-av-decay}, we obtain Theorem \ref{main-thm-bis}. 
\end{proof}

\subsection{Examples}\label{sec-ex}
Let us now concentrate in the case where $S \subset F$ is an open subset with $0 < \mathfrak{m}(S) < \infty$. 
It is easy to see that  $S$ has a unique decomposition 
\begin{align}\label{dec-S}
S= \bigsqcup_{k=1}^N B(x_k, q^{-n_k}), 
\end{align}
such that $B(x_k, q^{n_k})$ (with $n_k \in \Z$) is the {\bf largest} closed ball in $S$ containing $x_k \in S$. Here $N \in \N \cup \{\infty\}$.  

In the decomposition \eqref{dec-S} of $S$, we may assume that 
\[
l_0(S) = n_1 \le n_2 \le \cdots,
\]
where $l_0(S)$ is the smallest integer in the sequence $(n_k)_{k=1}^N$. For any $l\ge l_0(S)$, we define  the multiplicity of $l$ in the sequence $(n_k)_{k=1}^N$ by 
\begin{align}\label{mul-def}
m_l(S): = \# \{k: n_k = l\}. 
\end{align}

Fix $n \ge  l_0(S)$. For  any given $|y| =q^{-l}$ with $l \ge n$, it is easy to see that all those sub-balls $B(x_k, q^{n_k})$ in the decomposition \eqref{dec-S} whose radius is not smaller than $q^{-l}$, we have 
\[
B(x_k, q^{n_k}) + y = B(x_k, q^{n_k}). 
\] 
Therefore, by recalling the definition \eqref{mul-def} for the multiplicity, we have 
\begin{align}\label{pt-ctr}
\mathfrak{m}(S\setminus (S+y) )  \le  \sum_{a =  l+1}^\infty  m_a (S) \cdot \mathfrak{m}( B(0, q^{-a})) =  \sum_{a =  l+1}^\infty  m_a(S) q^{-a}. 
\end{align}

\begin{prop}\label{prop-mul}
If $M(S): = \sup_l m_l(S) < \infty$, then the  open subset $S$ satisfies the condition \eqref{con-geo}.
\end{prop}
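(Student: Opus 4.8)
The plan is to estimate the normalized integral appearing in \eqref{con-geo} directly, using the pointwise bound \eqref{pt-ctr} together with the uniform control $m_a(S) \le M(S)$. First I would decompose the ball of integration $B(0, q^{-n}) = \varpi^n \O_F$ into the concentric spheres
\[
S(0, q^{-l}) = \varpi^l \O_F \setminus \varpi^{l+1}\O_F, \qquad l \ge n,
\]
exactly as in the proof of Lemma \ref{lem-div}; the single point $y = 0$ contributes nothing since $\mathfrak{m}(S \setminus S) = 0$. On each sphere $\{|y| = q^{-l}\}$ with $l \ge n \ge l_0(S)$, the bound \eqref{pt-ctr} combined with $m_a(S) \le M(S)$ gives
\[
\mathfrak{m}(S \setminus (S+y)) \le M(S) \sum_{a = l+1}^\infty q^{-a} = \frac{M(S)}{q-1}\, q^{-l}.
\]

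Next I would integrate this bound sphere by sphere, recalling from the proof of Lemma \ref{lem-div} that $\mathfrak{m}(S(0, q^{-l})) = q^{-l}(1 - q^{-1})$. Since $1 - q^{-1} = (q-1)/q$, this yields
\[
\int_{S(0, q^{-l})} \mathfrak{m}(S \setminus (S+y)) \, \mathrm{d}\mathfrak{m}(y) \le \frac{M(S)}{q-1}\, q^{-l} \cdot q^{-l}(1 - q^{-1}) = \frac{M(S)}{q}\, q^{-2l}.
\]
Summing the geometric series over $l \ge n$ then gives
\[
\int_{B(0, q^{-n})} \mathfrak{m}(S \setminus (S+y)) \, \mathrm{d}\mathfrak{m}(y) \le \frac{M(S)}{q} \cdot \frac{q^{-2n}}{1 - q^{-2}} = \frac{M(S)\, q}{q^2 - 1}\, q^{-2n}.
\]
Finally, passing to the normalized integral by dividing through by $\mathfrak{m}(B(0, q^{-n})) = q^{-n}$, I obtain
\[
\dashint_{B(0, q^{-n})} \mathfrak{m}(S \setminus (S+y)) \, \mathrm{d}\mathfrak{m}(y) \le \frac{M(S)\, q}{q^2 - 1}\, q^{-n} = O(q^{-n}),
\]
which is exactly the condition \eqref{con-geo}.

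The computation is essentially routine once \eqref{pt-ctr} is in hand, so I do not expect a substantial obstacle; the content of the proposition is carried by the already-established geometric estimate \eqref{pt-ctr}. The only points requiring care are bookkeeping: summing the two geometric series $\sum_{a = l+1}^\infty q^{-a}$ and $\sum_{l \ge n} q^{-2l}$ correctly, and correctly accounting for the normalization factor $q^n = \mathfrak{m}(B(0, q^{-n}))^{-1}$, which is what converts the $q^{-2n}$ decay of the plain integral into the required $q^{-n}$ decay of the \emph{averaged} quantity. I note that the hypothesis $M(S) < \infty$ enters only through the uniform bound $m_a(S) \le M(S)$, and the argument in fact exhibits an explicit decay constant $M(S)\,q/(q^2 - 1)$ depending on $S$ solely through $M(S)$.
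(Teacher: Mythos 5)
Your proof is correct and follows essentially the same route as the paper: bound $\mathfrak{m}(S\setminus(S+y))$ on each sphere $\{|y|=q^{-l}\}$ using \eqref{pt-ctr} together with $m_a(S)\le M(S)$, integrate sphere by sphere, sum the geometric series, and divide by $\mathfrak{m}(B(0,q^{-n}))=q^{-n}$. The only difference is bookkeeping: your constant $M(S)\,q/(q^2-1)$ is in fact slightly sharper than the paper's $M(S)\,q^{3}/(q^2-1)$, because the paper's intermediate step replaces the exact value $\frac{q^{-l}}{q-1}\,\mathfrak{m}(\{|y|=q^{-l}\})=q^{-2l-1}$ by the larger $q^{-2l+1}$, which costs a factor of $q^{2}$ but of course does not affect the $O(q^{-n})$ conclusion.
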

\begin{proof}
For  any given $|y| =q^{-l}$ with $l \ge l_0(S)$,  the inequality \eqref{pt-ctr} implies that 
\[
\mathfrak{m}(S\setminus (S+y) )  \le    M(S) \sum_{a =  l+1}^\infty  q^{-a} =  M(S)   \frac{q^{-l}}{q-1}.
\]
Therefore, we have
\begin{align*}
& \dashint_{B(0, q^{-n})}\mathfrak{m}(S\setminus (S+y) ) \mathrm{d}\mathfrak{m}(y)  
\\
&  \le  q^n \sum_{l=n}^\infty \int_{\{y \in F: | y|= q^{-l}\}: }\mathfrak{m}(S\setminus (S+y) ) \mathrm{d}\mathfrak{m}(y)  
\\
& \le  q^n M(S) \sum_{l=n}^\infty   \frac{q^{-l}}{q-1}  \mathfrak{m}(\{y \in F: | y|= q^{-l}\})
\\
& =  q^n  M(S) \sum_{l=n}^\infty  q^{-2 l +1} = M(S) \frac{q^{3}}{q^2-1} q^{-n}.
\end{align*}
This implies that $S$ satisfies the  desired condition \eqref{con-geo}.
\end{proof}

\section*{Acknowlegements}
The author  is supported by the grant IDEX UNITI - ANR-11-IDEX-0002-02, financed by Programme ``Investissements d'Avenir'' of the Government of the French Republic managed by the French National Research Agency. During an earlier stage of this research, he was also partially supported by the  grant  346300 for IMPAN from the Simons Foundation and the matching 2015-2019 Polish MNiSW fund.

%
%

\def\cprime{$'$} \def\cydot{\leavevmode\raise.4ex\hbox{.}} \def\cprime{$'$}

\end{document}